\newtheorem{theorem}{Theorem}
\newtheorem{remark}{Remark}
\newsavebox\myboxA
\newsavebox\myboxB
\newlength\mylenA
\newcommand*\xoverline[2][0.75]{%
    \sbox{\myboxA}{$\m@th#2$}%
    \setbox\myboxB\null
    \ht\myboxB=\ht\myboxA%
    \dp\myboxB=\dp\myboxA%
    \wd\myboxB=#1\wd\myboxA
    \sbox\myboxB{$\m@th\overline{\copy\myboxB}$}
    \setlength\mylenA{\the\wd\myboxA}
    \addtolength\mylenA{-\the\wd\myboxB}%
    \ifdim\wd\myboxB<\wd\myboxA%
       \rlap{\hskip 0.5\mylenA\usebox\myboxB}{\usebox\myboxA}%
    \else
        \hskip -0.5\mylenA\rlap{\usebox\myboxA}{\hskip 0.5\mylenA\usebox\myboxB}%
    \fi}
\journal{arXiv.org}
\newcommand{\TheTitle}{On the stability of robust dynamical low-rank approximations for hyperbolic problems} 
\date{\today}
\newcommand{\RomanNumeralCaps}[1]
    {\MakeUppercase{\romannumeral #1}}
\journal{arXiv}
\begin{document}
\begin{frontmatter}

\title{\TheTitle}

\author[adressJonas]{Jonas Kusch}
\author[adressLukas]{Lukas Einkemmer}
\author[adressGianluca]{Gianluca Ceruti}

\address[adressJonas]{Karlsruhe Institute of Technology, Karlsruhe,jonas.kusch@kit.edu}
\address[adressLukas]{University of Innsbruck, Innsbruck, Austria, lukas.einkemmer@uibk.ac.at}
\address[adressGianluca]{Universit{\"a}t T{\"u}bingen, T{\"u}bingen, Germany, ceruti@na.uni-tuebingen.de}

\begin{abstract}
The dynamical low-rank approximation (DLRA) is used to treat high-dimensional problems that arise in such diverse fields as kinetic transport and uncertainty quantification. Even though it is well known that certain spatial and temporal discretizations when combined with the DLRA approach can result in numerical instability, this phenomenon is poorly understood. In this paper we perform a $L^2$ stability analysis for the corresponding nonlinear equations of motion. This reveals the source of the instability for the projector splitting integrator when first discretizing the equations and then applying the DLRA. Based on this we propose a projector splitting integrator, based on applying DLRA to the continuous system before performing the discretization, that recovers the classic CFL condition. We also show that the unconventional integrator has more favorable stability properties and explain why the projector splitting integrator performs better when approximating higher moments, while the unconventional integrator is generally superior for first order moments. Furthermore, an efficient and stable dynamical low-rank update for the scattering term in kinetic transport is proposed. Numerical experiments for kinetic transport and uncertainty quantification, which confirm the results of the stability analysis, are presented.

\end{abstract}

\begin{keyword}
Dynamical low-rank approximation, numerical stability, kinetic equations, uncertainty quantification, projector-splitting integrator, unconventional integrator
\end{keyword}

\end{frontmatter}

\section{Introduction}

Dynamical low-rank approximation (DLRA) \cite{KochLubich07} for parametrized partial differential equations has gained increasing attention in the last years. This stems mainly from its ability to mitigate the curse of dimensionality in terms of computational costs and memory requirements. Problems in which dynamical low-rank approximation has proven its efficiency include, e.g., kinetic theory \cite{EiL18,einkemmer2019quasi,PeMF20,PeM20,einkemmer2020low,EiHY21,EiJ21,guo2021low} as well as uncertainty quantification \cite{FeL18,MuN18,MuNV20,SaL09,kusch2021DLRUQ}. In both fields the high-dimensional phase space implies that obtaining numerical solutions is extremely expensive both in terms of memory and computational cost.

Robust integrators for the DLRA evolution equations are the matrix projector-splitting integrator, introduced in \cite{LubichOseledets}, as well as the unconventional integrator, introduced in  \cite{CeL21}. Both integrators are unaffected by the presence of small singular values. A main difference of the unconventional integrator is that the dynamics is only moving forward in time, whereas the projector-splitting integrator includes a step, which moves backward. This property plays a key role in the stability for spatial discretizations, which we will see in this work. Furthermore, the unconventional integrator preserves symmetry or anti-symmetry of the original problem \cite{CeL21}. On the other hand, the projector splitting integrator can be extended to second order, which has been widely used in the literature \cite{EiL18,einkemmer2019low,einkemmer2021asymptotic,einkemmer2019quasi}.

It has been shown in numerical experiments that the unconventional integrator yields smoother solution profiles for first order moments such as the scalar flux in radiation transport or the expected value in uncertainty quantification \cite{kusch2021DLRUQ}. Higher-order moments, however, are dampened heavily by the unconventional integrator and the projector-splitting integrator allows for a more adequate representation \cite{kusch2021DLRUQ}. However, the reason for this behavior is not understood, which is the main motivation for this work.

In order to implement a dynamical low-rank integrator, the partial differential equation under consideration has to be discretized. There are two main approaches to determine an approximation to spatial derivatives in the equations of DLRA. First, the spatial discretization can be performed for the original equation, leading to a matrix differential equation to which the dynamical low-rank approximation is applied. Second, the dynamical low-rank approximation can be derived for the continuous problem and the evolution equations of DLRA can be discretized in space in a subsequent step (as has been suggested in \cite{EiL18}). The first approach is extensively used. However, as we will show, it can suffer from instabilities. For the second approach, a set of differential equations is obtained that can be discretized by an appropriate method. This enables the implementation of a suitable stabilization for each individual substep of the two integrators. The construction of adequate stabilization strategies for each substep of the two integrators requires knowledge of dampening and amplification of the underlying dynamics, which we aim to establish in this work.

In this work, we answer the questions
\begin{enumerate}
    \item Is there an analytic explanation why the projector-splitting integrator yields oscillatory first-order moments, while showing a satisfactory approximation for second-order moments compared to the unconventional integrator?
    \item Should dynamical low-rank approximation be derived for the matrix ODE which results from a discretization of the original problem? Or should dynamical low-rank approximation be performed for the continuous problem and the discretization be applied to the continuous DLRA evolution equations?
    \item If the latter option is chosen: How should the time and space discretization for the different substeps be chosen to obtain a stable and accurate numerical method?
\end{enumerate}
The tool that we use to answer these questions is a Fourier approach in the spirit of a von Neumann stability analysis. Remarkably, the Fourier analysis provides a deep understanding of the stability of the non-linear DLRA evolution equations, despite being a tool for linear problems. This mainly stems from the fact that non-linearities only arise in the basis functions, which, by Parselval's identity, do not affect the $L^2$-norm of the solution. The analysis recovers the behaviour seen in numerical experiments and enhances the understanding of dampening effects that are observed in the different DLRA approaches. To the best of our knowledge, a stability estimate of numerical schemes for DLRA evolution equations is only available in the case of the matrix projector-splitting integrator applied to uncertain parabolic problems \cite{kazashi2020stability}. This analysis uses a discrete variational principle, which does not apply for hyperbolic and kinetic problems investigated in this work. The Fourier approach chosen in our work enables us to propose a stable discretization of the continuous projector splitting based dynamical low-rank approximation. In contrast to previously derived schemes, the resulting discretization for the projector-splitting integrator is $L^2$-stable in the stability region of the full problem.  Furthermore, we introduce a stable and efficient discretization of scattering for radiation transport. For this, we split scattering and streaming parts which is a common practice in radiation transport \cite{adams2002fast,hauck2013collision,crockatt2017arbitrary}. By noting that the integrator for the scattering part only imposes dynamics in the $L$-step, we can omit the remainder, which reduces computational costs and provides a stable treatment of the scattering terms.

This paper is structured as follows: After this introduction, we provide a general background to the used methods in Section~\ref{sec:backgrounds} to give an overview on existing work and to fix notation. Here, we derive a spatial discretization of the original problem in Section~\ref{sec:discretizationFull}, then we present a stability analysis of this discretization in Section~\ref{sec:L2StabilitySchemeFull} and include scattering into this scheme in Section~\ref{sec:backgroundScattering}. A short review of dynamical low-rank approximation is provided in Section~\ref{sec:backgroundDLR}, with a focus on the two robust integrators as well as their discrete and continuous formulations. Section~\ref{sec:L2StabilityProjectorSplitting} presents the stability analysis for the matrix projector-splitting integrator. We start by pointing out potential stability issues for the discrete DLRA formulation in Section~\ref{sec:L2StabilityDLRAdiscrete}, and propose a stable discretization for the continuous formulation in Section~\ref{sec:lowRankThenDiscretize}. In Section~\ref{sec:L2StabilityDLRAdiscreteUnc}, the proposed stability analysis is applied to the unconventional integrator which is shown to be stable even for the discrete DLRA formulation. An efficient and stable treatment of scattering terms that arise in kinetic transport is discussed in Section~\ref{sec:scattering} and we provide numerical examples in Section~\ref{sec:results}.

\section{Background}\label{sec:backgrounds}
\subsection{Discretization of the full problem}\label{sec:discretizationFull}
Parametric linear systems play an important role in various applications such as radiative transport or uncertainty quantification for material deformations. In the following, let us study a linear system of the form
\begin{align}\label{eq:advectionSystem}
\partial_t \mathbf u(t,x) = -\mathbf{A}\partial_x\mathbf u(t,x),
\end{align}
where we have $\mathbf u = (u_1,\cdots,u_{N+1})\in\mathbb{R}^N$ and $\mathbf A = (a_{\ell k})_{\ell,k=1}^{N+1}\in\mathbb{R}^{(N+1)\times (N+1)}$. Such systems can for example arise in the P$_N$ or S$_N$ approximations to radiative transfer \cite{case1967linear,lewis1984computational,adams2002fast} as well as stochastic-Galerkin approximations for linear problems with uncertainty \cite{gottlieb2008galerkin,gerster2019discretized}. Let us discretize the above equation in space using a finite volume approximation with Lax-Friedrichs numerical flux. The spatial domain is decomposed into grid cells $I_j = [x_j,x_{j+1}]$ with equidistant spacing $\Delta x$. A semi-discrete method for the solution $\mathbf u_{j} = (u_{jk})_{k=1}^{N+1}$, where $u_{jk} := \frac{1}{\Delta x}\int_{I_j}u_k(t,x)\,dx$ then takes the form
\begin{align*}
    \dot{\mathbf u}_{j}(t) = -\frac{1}{\Delta x}\left(\mathbf{f}^*(\mathbf u_{j}(t),\mathbf u_{j+1}(t))-\mathbf{f}^*(\mathbf u_{j-1}(t),\mathbf u_{j}(t))\right).
\end{align*}
The Lax-Friedrichs numerical flux with input $\mathbf u,\mathbf v\in\mathbb{R}^{N+1}$ reads
\begin{align*}
    \mathbf{f}^*(\mathbf u,\mathbf v) = \frac12\left(\mathbf A (\mathbf u+\mathbf v) -\frac{\Delta x}{\Delta t}(\mathbf v-\mathbf u)\right).
\end{align*}
Writing the scheme without the definition of numerical fluxes gives
\begin{align}\label{eq:PNSemiDiscrete}
\dot{\mathbf u}_{j}(t)= \frac{\mathbf u_{j-1}(t)-2\mathbf u_{j}(t)+\mathbf u_{j+1}(t)}{2\Delta t}- \frac{1}{2\Delta x}\mathbf A(\mathbf u_{j+1}(t)-\mathbf u_{j-1}(t)).
\end{align}
To simplify notation, we rewrite the time update in matrix notation. Let us define the tridiagonal matrices $\mathbf L^{(1)},\mathbf L^{(2)}\in\mathbb{R}^{N_x\times N_x}$ with non-zero entries in the off-diagonals
\begin{align*}
L^{(1)}_{j,j+1} = L^{(1)}_{j,j-1} = \frac{1}{2}, \quad\text{and }\enskip L^{(2)}_{j,j\pm1} = \pm\frac{\Delta t}{2\Delta x}.
\end{align*}
Then, when collecting the solution in $\mathbf u = (\mathbf u_{j})_{j= 1}^{N_x}\in\mathbb{R}^{N_x\times (N+1)}$, the scheme \eqref{eq:PNSemiDiscrete} becomes
\begin{align}\label{eq:schemeMatrixNotationSemiDiscrete}
\dot{\mathbf{u}}(t) = \frac{1}{\Delta t}\left((\mathbf L^{(1)}-\mathbf I)\mathbf u(t)-\mathbf L^{(2)}\mathbf u(t)\mathbf{A}^T\right) =: \mathbf{F}(\mathbf{u}(t)).
\end{align}
Using a forward Euler time discretization with time step size $\Delta t$ and using $\mathbf u^{n} \approx \mathbf u(t_n)$, gives the fully discrete scheme
\begin{align}\label{eq:schemeMatrixNotation}
\mathbf{u}^{n+1} = \mathbf L^{(1)}\mathbf u^{n}-\mathbf L^{(2)}\mathbf u^{n}\mathbf{A}^T.
\end{align}
\subsection{$L^2$-stability analysis for the full problem}\label{sec:L2StabilitySchemeFull}
To recall certain details in the classical $L^2$-stability analysis and to fix notation, let us start by recalling the $L^2$-stability analysis for the full problem. Without loss of generality, we assume the spatial domain to be the interval $[-1,1]$. In this case, a discrete Fourier ansatz for the discretized solution takes the form
\begin{align}\label{eq:WaveAnsatzSystem}
u_{jk}(t) =: u_{k}(t,x_j)  = \sqrt{\frac{\Delta x}{N+1}}\sum_{\alpha=1}^{N_x}\sum_{\ell=1}^{N+1}\hat u_{\alpha\ell}(t)\exp(i\alpha\pi x_j)\exp\left(2\pi i\frac{\ell k}{N+1}\right).
\end{align}
Here, $i$ denotes the imaginary unit. Collecting the basis functions in the matrices 
\begin{align*}
\mathbf{E}_x = \left(\sqrt{\Delta x}\exp(i\alpha\pi x_j)\right)_{j,\alpha=1}^{N_x}\text{ and }\mathbf{E}_{\mu} = \left(\frac{1}{\sqrt{N+1}}\exp\left(2\pi i\frac{\ell m}{N+1}\right)\right)_{m,\ell=1}^{N+1}
\end{align*}
lets us write the Fourier ansatz \eqref{eq:WaveAnsatzSystem} at time $t_n$ in matrix notation as $\mathbf{u}^n = \mathbf{E}_x\mathbf{\hat{u}}^n\mathbf{E}_{\mu}^H$. We use an upper case $H$ to indicate the adjoint matrix. The next step is to plug this wave ansatz into \eqref{eq:schemeMatrixNotation}, which gives
\begin{align*}
\mathbf u^{n+1} = \mathbf L^{(1)}\mathbf{E}_x\mathbf{\hat u}^n\mathbf{E}_{\mu}^H-\mathbf L^{(2)}\mathbf{E}_x\mathbf{\hat u}^n\mathbf{E}_{\mu}^H\mathbf{A}^T.
\end{align*}
The choice of our ansatz will simplify this scheme, since
\begin{align}\label{eq:FourierResponseSpatialScheme}
\mathbf L^{(1)}\mathbf{E}_x = \mathbf{E}_x \mathbf{D}^{(1)} , \quad\text{and }\enskip \mathbf L^{(2)}\mathbf{E}_x = \mathbf{E}_x \mathbf{D}^{(2)}.
\end{align}
Here, the diagonal matrices $\mathbf{D}^{(1)},\mathbf{D}^{(2)}\in\mathbb{C}^{N_x\times N_x}$ have entries
\begin{align*}
    D_{\alpha\beta}^{(1)} = \frac{e^{i\alpha\pi \Delta x}+e^{-i\alpha\pi \Delta x}}{2}\delta_{\alpha\beta} = cos(\alpha \pi \Delta x)\delta_{\alpha \beta},\quad D_{\alpha\beta}^{(2)} = \frac{\Delta t}{2\Delta x}(e^{i\alpha\pi \Delta x}-e^{-i\alpha\pi \Delta x})\delta_{\alpha\beta}= \frac{i \Delta t}{\Delta x} \sin(\alpha \pi \Delta x).
\end{align*}
Hence, the Fourier ansatz simplifies the scheme to
\begin{align*}
\mathbf u^{n+1} =&\left(\mathbf{E}_x\mathbf{D}^{(1)}\mathbf{\hat u}^n\mathbf{E}_{\mu}^H-\mathbf{E}_x\mathbf{D}^{(2)}\mathbf{\hat u}^n\mathbf{E}_{\mu}^H\mathbf{A}^T\right)\\
=& \mathbf{E}_x\left(\mathbf{D}^{(1)}\mathbf{\hat u}^n-\mathbf{D}^{(2)}\mathbf{\hat u}^n\mathbf{E}_{\mu}^H\mathbf{A}^T\mathbf{E}_{\mu}\right)\mathbf{E}_{\mu}^H.
\end{align*}
Now, with $\mathbf u^{n+1} = \mathbf{E}_x\mathbf{\hat u}^{n+1}\mathbf{E}_{\mu}^H$, we directly see that
\begin{align*}
\mathbf{\hat u}^{n+1} = \mathbf{D}^{(1)}\mathbf{\hat u}^n-\mathbf{D}^{(2)}\mathbf{\hat u}^n\mathbf{E}_{\mu}^H\mathbf{A}^T\mathbf{E}_{\mu}.
\end{align*}
We are interested in deriving an estimate for the Frobenius norm of $\mathbf{\hat u}^{n+1}$ (i.e.~the $L^2$ norm of the vector containing all degrees of freedom), which we denote by $\Vert\mathbf{\hat u}^{n+1}\Vert_F$. In the following we use $\Vert \cdot \Vert$ to denote the spectral matrix norm and $\Vert\cdot\Vert_2$ to denote the Euclidean norm for vectors. 

Collecting the Fourier coefficients in a vector $\mathbf{\hat u}_{\alpha}^{n}=\left(\hat u_{\alpha,\ell}^{n}\right)_{\ell=1}^{N+1}$ gives the time update
\begin{align*}
\mathbf{\hat u}_{\alpha}^{n+1,T} =& \mathbf{\hat u}_{\alpha}^{n,T}\bigg(\cos(\alpha \pi \Delta x)\mathbf{I}-  \frac{i \Delta t}{\Delta x} \sin(\alpha \pi \Delta x)\mathbf{E}_{\mu}^H\mathbf A^T\mathbf{E}_{\mu}\bigg).
\end{align*}
Hence, when denoting the $k_{th}$ eigenvalue of $\mathbf A$ as $\lambda_{k}(\mathbf A)$, the Euclidean norm gives for every $\alpha$
\begin{align*}
\Vert \mathbf{\hat u}_{\alpha}^{n+1}\Vert_2 \leq&  \max_{k}\left|  \cos(\alpha \pi \Delta x)  - \frac{i \Delta t}{\Delta x} \sin(\alpha \pi \Delta x)\lambda_{k}(\mathbf A)\right|\Vert \mathbf{\hat u}^n_{\alpha}\Vert_2\\
=& \max_{k}\left|D^{(1)}_{\alpha\alpha}-D^{(2)}_{\alpha\alpha}\lambda_{k}(\mathbf A)\right|\Vert \mathbf{\hat u}^n_{\alpha}\Vert_2.
\end{align*}
We thus have that
\begin{align*}
    \left|D^{(1)}_{\alpha\alpha}-D^{(2)}_{\alpha\alpha}\lambda_{k}(\mathbf A)\right| = \sqrt{\cos^2(\alpha\pi\Delta x)+\lambda_{k}(\mathbf A)^2\frac{\Delta t^2}{\Delta x^2}\sin^2(\alpha\pi\Delta x)}.
\end{align*}
Hence, the eigenvalue which maximizes the amplification is $\lambda_{max}(\mathbf A)$, which denotes the biggest absolute eigenvalue of $\mathbf A$. Then, the amplification of a Fourier mode with wave number $\alpha$ becomes
\begin{align}\label{eq:componentUpdate}
\Vert \mathbf{\hat u}_{\alpha}^{n+1}\Vert_2 \leq \left|D^{(1)}_{\alpha\alpha}-D^{(2)}_{\alpha\alpha}\lambda_{max}(\mathbf A)\right|\Vert \mathbf{\hat u}^n_{\alpha}\Vert_2.
\end{align}
Let us store the amplification factor in a diagonal matrix $\mathbf{D}\in\mathbb{R}^{N_x\times N_x}$ with
\begin{align*}
    D_{\alpha\alpha} = \left|D^{(1)}_{\alpha\alpha}-D^{(2)}_{\alpha\alpha}\lambda_{max}(\mathbf A)\right|
\end{align*}
and collect the norm at wave number $\alpha$ in a vector $\mathbf e^n = \left(\Vert \mathbf{\hat u}_{\alpha}^{n}\Vert_2\right)_{\alpha = 1}^{N_x}$. Due to \eqref{eq:componentUpdate}, the estimate $\mathbf{e}^{n+1}\leq \mathbf{D}\mathbf{e}^{n}$ holds component-wise. Therefore, we have
\begin{align*}
\Vert \mathbf{e}^{n+1} \Vert_2 \leq  \Vert \mathbf{D}\Vert\cdot\Vert\mathbf{e}^{n}\Vert_2 = \lambda_{max}(\mathbf{D})\Vert\mathbf{e}^{n}\Vert_2.
\end{align*}
Hence, for the Frobenius norm, we obtain
\begin{align*}
\Vert \mathbf{\hat u}^{n+1} \Vert_{F} \leq \max_{\alpha} \vert D_{\alpha\alpha}\vert\cdot \Vert \mathbf{\hat u}^{n} \Vert_{F}.
\end{align*}
Due to Parseval's identity, we have
\begin{align}\label{eq:FrobeniusNormEstimate}
\Vert \mathbf{ u}^{n+1} \Vert_{F} =\left\Vert \mathbf{E}_x \mathbf{\hat u}^{n+1} \mathbf{E}_{\mu}^H\right\Vert_F\leq \max_{\alpha}\vert D_{\alpha\alpha}\vert\cdot \Vert \mathbf{ u}^{n} \Vert_{F}.
\end{align}
When using the CFL number $c = \lambda_{max}(\mathbf A)\Delta t/\Delta x$ we obtain
\begin{align*}
\left\vert D_{\alpha\alpha}\right\vert \leq\sqrt{\cos^2(\alpha\pi\Delta x)+c^2\sin^2(\alpha\pi\Delta x)}.
\end{align*}
To obtain $L^2$-stability we require an amplification factor which is smaller or equal to one, i.e., we must pick $c\leq 1$. For linear schemes, this stability (together with consistency) can be used to prove convergence. However, in this work, we focus on understanding dampening properties of the different integrators and leave the question of convergence for the (necessarily nonlinear) dynamical low-rank approximation to future research.
\subsection{Stability for scattering terms}\label{sec:backgroundScattering}
In the following, let us focus on the application of radiative transport. In this case, the original advection system \eqref{eq:advectionSystem} is augmented by scattering and absorption effects. This leads to the P$_N$ equations, which read
\begin{align}\label{eq:PN}
\partial_t \mathbf u(t,x) = -\mathbf A\partial_x \mathbf u(t,x)-\sigma_a \mathbf u(t,x) + \sigma_s
\mathcal{S}\mathbf u(t,x) \quad \text{ with }a_{km}=\int_{-1}^1 \mu P_k(\mu) P_m(\mu)\,d\mu.
\end{align}
Here, $P_k$ denotes the Legendre polynomial of order $k$ and $\mathcal{S}\in\mathbb{R}^{(N+1)\times(N+1)}$ is a scattering matrix. The variable $\mu\in[-1,1]$ is the projected direction in which particles travel. To shorten notation, we define $\mathbf G := \sigma_s\mathcal{S}-\sigma_a\mathbf I$ with entries $G_{kk} = \sigma_s g_k -\sigma_a$. For isotropic scattering one for example has $g_k = \delta_{k0}-1$, i.e. scattering will not directly affect the scalar flux while dampening higher order moments. Let us investigate how the additional scattering affects stability. Commonly, scattering and streaming are treated separately through a splitting step, see e.g. \cite{adams2002fast}. In this case, we can update the solution from time $t_0$ to time $t_1$ by
\begin{subequations}\label{eq:streamingScatteringEqns}
\begin{alignat}{2}
\partial_t\mathbf{u}_{\RomanNumeralCaps{1}}(t,x) &= -\mathbf A\partial_x \mathbf u_{\RomanNumeralCaps{1}}(t,x), \qquad&& \mathbf{u}_{\RomanNumeralCaps{1}}(t_0,x) = \mathbf{u}(t_0,x)\label{eq:schemeMatrixNotationA2cont}\\
\partial_t\mathbf{u}_{\RomanNumeralCaps{2}}(t,x) &= \mathbf G\mathbf u_{\RomanNumeralCaps{2}}(t,x),\qquad&& \mathbf{u}_{\RomanNumeralCaps{2}}(t_0,x) = \mathbf{u}_{\RomanNumeralCaps{1}}(t_1,x)\label{eq:schemeMatrixNotationB2cont}.
\end{alignat}
\end{subequations}
Choosing the discretization proposed in Section~\ref{sec:discretizationFull}, the update of the full problem is composed of the two substeps
\begin{subequations}
\begin{align}
\mathbf{u}^{n+1/2} &= \mathbf L^{(1)}\mathbf u^{n}-\mathbf L^{(2)}\mathbf u^{n}\mathbf{A}^T,\label{eq:schemeMatrixNotationA2}\\
\mathbf{u}^{n+1} &= \mathbf{u}^{n+1/2}+\Delta t\mathbf u^{n+1/2}\mathbf G\label{eq:schemeMatrixNotationB2}.
\end{align}
\end{subequations}
Written more compactly as a single update, the scheme becomes
\begin{align}\label{eq:schemeMatrixNotationNew}
\mathbf{u}^{n+1} = \left(\mathbf L^{(1)}\mathbf u^{n}-\mathbf L^{(2)}\mathbf u^{n}\mathbf{A}^T\right)\left(\mathbf I + \Delta t \mathbf G\right)
\end{align}
Let us again use a discrete Fourier ansatz $\mathbf u^n = \mathbf{E}_x\mathbf{\hat u}^n\mathbf{E}_{\mu}^H$. The next step is to plug this wave ansatz into \eqref{eq:schemeMatrixNotationNew}, which gives
\begin{align*}
\mathbf u^{n+1} =&\left(\mathbf{E}_x\mathbf{D}^{(1)}\mathbf{\hat u}^n\mathbf{E}_{\mu}^H-\mathbf{E}_x\mathbf{D}^{(2)}\mathbf{\hat u}^n\mathbf{E}_{\mu}^H\mathbf{A}^T\right)\left( \mathbf I +\Delta t\mathbf G\right)\\
=& \left(\mathbf{E}_x\mathbf{D}^{(1)}\mathbf{\hat u}^n\mathbf{E}_{\mu}^H-\mathbf{E}_x\mathbf{D}^{(2)}\mathbf{\hat u}^n\mathbf{E}_{\mu}^H\mathbf{A}^T\mathbf{E}_{\mu}\mathbf{E}_{\mu}^H\right)\left( \mathbf{E}_{\mu}\mathbf{E}_{\mu}^H +\Delta t\mathbf G\mathbf{E}_{\mu}\mathbf{E}_{\mu}^H\right)\\
=& \mathbf{E}_x\left(\mathbf{D}^{(1)}\mathbf{\hat u}^n-\mathbf{D}^{(2)}\mathbf{\hat u}^n\mathbf{E}_{\mu}^H\mathbf{A}^T\mathbf{E}_{\mu}\right)\left( \mathbf{I} +\Delta t\mathbf{E}_{\mu}^H\mathbf G\mathbf{E}_{\mu}\right)\mathbf{E}_{\mu}^H.
\end{align*}
Now, with $\mathbf u^{n+1} = \mathbf{E}_x\mathbf{\hat u}^{n+1}\mathbf{E}_{\mu}^H$, we directly see that
\begin{align*}
\mathbf{\hat u}^{n+1} = \left(\mathbf{D}^{(1)}\mathbf{\hat u}^n-\mathbf{D}^{(2)}\mathbf{\hat u}^n\mathbf{E}_{\mu}^H\mathbf{A}^T\mathbf{E}_{\mu}\right)\left( \mathbf{I} +\Delta t\mathbf{E}_{\mu}^H\mathbf G\mathbf{E}_{\mu}\right).
\end{align*}
Using Parseval's identity yields 
\begin{align}\label{eq:FrobeniusNormEstimate}
\Vert \mathbf{ u}^{n+1} \Vert_F =\left\Vert \left( \mathbf{D}^{(1)}\mathbf{\hat u}^n-\mathbf{D}^{(2)}\mathbf{\hat u}^n\mathbf{E}_{\mu}^H\mathbf{A}^T\mathbf{E}_{\mu}\right)\left( \mathbf{I} +\Delta t\mathbf{E}_{\mu}^H\mathbf G\mathbf{E}_{\mu}\right) \right\Vert_F\leq \max_{\alpha}\vert \widetilde D_{\alpha\alpha}\vert\cdot \Vert \mathbf{ u}^{n} \Vert_F
\end{align} 
with
\begin{align*}
    \widetilde D_{\alpha\alpha} = \left|D^{(1)}_{\alpha\alpha}-D^{(2)}_{\alpha\alpha}\lambda_{max}(\mathbf A)\right|\cdot \max_{\ell}\left|1+\Delta t G_{\ell\ell}\right|.
\end{align*}
With the CFL number $\lambda_{max}(\mathbf A)\Delta t/\Delta x = c$, this gives
\begin{align*}
\Vert \mathbf{ u}^{n+1} \Vert_F \leq \max_{\ell}\left|1+\Delta t G_{\ell\ell}\right|\cdot\sqrt{\cos^2(\alpha\pi\Delta x)+c^2\sin^2(\alpha\pi\Delta x)}\cdot\Vert \mathbf{ u}^{n} \Vert_F.
\end{align*}

\subsection{Dynamical low-rank approximation}\label{sec:backgroundDLR}
In the following, we give a short overview on dynamical low-rank approximation \cite{KochLubich07} for problems of the form \eqref{eq:advectionSystem}. The main idea of DLRA is to represent and evolve the solution on a manifold of rank $r$ functions. There are two approaches to derive the evolution equations of dynamical low-rank approximation. The first one chooses a low-rank approximation on the matrix solution of \eqref{eq:schemeMatrixNotationSemiDiscrete} and the second one chooses a low-rank approximation on the continuous level for the solution of the original problem \eqref{eq:advectionSystem}, which is subsequently discretized. 

Let us start by presenting DLRA for the discrete system \eqref{eq:schemeMatrixNotationSemiDiscrete}. In this case, the solution $\mathbf u(t)\in\mathbb{R}^{N_x\times(N+1)}$ is represented by
\begin{align}\label{eq:rankrsol}
\mathbf u(t)\approx\mathbf{X}(t)\mathbf{S}(t)\mathbf{W}(t)^T,
\end{align}
where $\mathbf{X}\in\mathbb{R}^{N_x\times r}$, $\mathbf{S}\in\mathbb{R}^{r\times r}$ and $\mathbf{W}\in\mathbb{R}^{(N+1)\times r}$. The aim is to derive evolution equations for each of these factorization matrices. Let us denote the set of matrices that have the form \eqref{eq:rankrsol} by $\mathcal{M}_r$. Then, we wish to find $\mathbf u_r\in\mathcal{M}_r$ which fulfills
\begin{align}\label{eq:DLRproblem}
\dot{\mathbf u}_r(t)\in T_{ \mathbf u_r(t)}\mathcal{M}_r \qquad \text{such that} \qquad \left\Vert \dot{\mathbf u}_r(t)-\mathbf{F}(\mathbf u(t)) \right\Vert = \text{min},
\end{align}
where $\mathbf F$ denotes the right-hand side of the semi-discrete scheme \eqref{eq:schemeMatrixNotationSemiDiscrete}. We use $T_{\mathbf u_r(t)}\mathcal{M}_r$ to denote the tangent space of $\mathcal{M}_r$ at $\mathbf u_r(t)$. The stated problem can be reformulated \cite[Lemma~4.1]{KochLubich07} as 
\begin{align}\label{eq:lowRankProjector}
\dot{\mathbf u}_r(t) = \mathbf{P}(\mathbf u_r(t))\mathbf{F}(\mathbf u_r(t)),
\end{align}
where $\mathbf{P}$ denotes the orthogonal projection onto the tangent space, which is given by
\begin{align*}
\mathbf P\mathbf g = \mathbf{X}\mathbf{X}^T \mathbf g - \mathbf{X}\mathbf{X}^T \mathbf g \mathbf{W}\mathbf{W}^T + \mathbf g \mathbf{W}\mathbf{W}^T.
\end{align*}
The evolution equation \eqref{eq:lowRankProjector} is then split by a Lie-Trotter splitting technique, yielding
\begin{subequations}\label{eq:projectorSplitEq}
\begin{alignat}{2}
\dot{\mathbf u}_{\RomanNumeralCaps{1}}(t) &= \mathbf{F}(\mathbf u_{\RomanNumeralCaps{1}}(t))\mathbf{W}\mathbf{W}^T, \quad && \mathbf u_{\RomanNumeralCaps{1}}(t_0) = \mathbf u_r(t_0), \label{eq:DLR1}\\ 
\dot{\mathbf u}_{\RomanNumeralCaps{2}}(t) &= -\mathbf{X}\mathbf{X}^T\mathbf{F}(\mathbf u_{\RomanNumeralCaps{2}}(t))\mathbf{W}\mathbf{W}^T, \quad && \mathbf u_{\RomanNumeralCaps{2}}(t_0) = \mathbf u_{\RomanNumeralCaps{1}}(t_1),\label{eq:DLR2}\\ 
\dot{\mathbf u}_{\RomanNumeralCaps{3}}(t) &= \mathbf{X}\mathbf{X}^T \mathbf{F}(\mathbf u_{\RomanNumeralCaps{3}}(t)) , \quad && \mathbf u_{\RomanNumeralCaps{3}}(t_0) = \mathbf u_{\RomanNumeralCaps{2}}(t_1).\label{eq:DLR3}
\end{alignat}
\end{subequations}
This scheme can be used to update the solution from $\mathbf u_r(t_0)$ to $\mathbf u_r(t_1) = \mathbf u_{\RomanNumeralCaps{3}}(t_1)$. These split equations are reformulated to yield an efficient and robust integrator. Each substep in the above equations has a decomposition of the form \eqref{eq:rankrsol}. Defining the decompositions $\mathbf u_{\RomanNumeralCaps{1}} = \mathbf{K}\mathbf{W}^T$ and $\mathbf u_{\RomanNumeralCaps{3}} = \mathbf{X}\mathbf{L}$ gives the \textit{matrix projector-splitting integrator}
\begin{enumerate}
    \item \textbf{$K$-step}: Update $\mathbf X^{0}$ to $\mathbf X^{1}$ and $\mathbf S^0$ to $\mathbf{\widetilde S}^0$ via
\begin{align}
\dot{\mathbf K}(t) &= \mathbf{F}(\mathbf{K}(t)\mathbf{W}^{0,T})\mathbf{W}^0, \qquad \mathbf K(t_0) = \mathbf{X}^0\mathbf{S}^0.\label{eq:KStepSemiDiscrete}
\end{align}
Determine $\mathbf X^1$ and $\mathbf{\widetilde S}^0$ with $\mathbf K(t_1) = \mathbf X^1 \mathbf{\widetilde S}^0$ by performing a QR decomposition.
\item \textbf{$S$-step}: Update $\mathbf{\widetilde S^0}$ to $\mathbf{\widetilde S^1}$ via
\begin{align}
\dot{\mathbf{\widetilde S}}(t) = -\mathbf{X}^{1,T}\mathbf{F}(\mathbf{X}^{1}\mathbf{\widetilde S}(t)\mathbf{W}^{0,T})\mathbf{W}^0, \qquad \mathbf{\widetilde S}(t_0) = \mathbf{\widetilde S}^0\label{eq:SStepSemiDiscrete}
\end{align}
and set $\mathbf{\widetilde S}^1 = \mathbf{\widetilde S}(t_1)$.
\item \textbf{$L$-step}: Update $\mathbf W^0$ to $\mathbf W^1$ and $\mathbf{\widetilde S}^1$ to $\mathbf S^1$ via
\begin{align}
\dot{\mathbf L}(t) &= \mathbf{X}^{1,T}\mathbf{F}(\mathbf{X}^1\mathbf{L}(t)), \qquad \mathbf L(t_0) = \mathbf{\widetilde S}^1 \mathbf{W}^{0,T}.\label{eq:LStepSemiDiscrete}
\end{align}
Determine $\mathbf W^1$ and $\mathbf S^1$ with $\mathbf L(t_1) = \mathbf S^1 \mathbf W^{1,T}$ by performing a QR decomposition.
\end{enumerate}
The time updated solution is then given by $\mathbf{u}_r(t_1) = \mathbf{X}^1\mathbf{S}^1\mathbf{W}^{1,T}$. For more details on the matrix projector-splitting integrator, we refer to \cite{LubichOseledets}. 

Recently, a further robust integrator, called the \textit{unconventional integrator}, has been introduced in \cite{CeL21}. This integrator works as follows:
\begin{enumerate}
    \item \textbf{$K$-step}: Update $\mathbf X^{0}$ to $\mathbf X^{1}$ via
    \begin{align}
        \dot{\mathbf K}(t) &= \mathbf{F}(\mathbf{K}(t)\mathbf{W}^{0,T})\mathbf{W}^0, \qquad \mathbf K(t_0) = \mathbf{X}^0\mathbf{S}^0.\label{eq:KStepSemiDiscreteUI}
    \end{align}
Determine $\mathbf X^1$ with $\mathbf K(t_1) = \mathbf X^1 \mathbf R$ and store $\mathbf M = \mathbf X^{1,T}\mathbf X^0$.
\item \textbf{$L$-step}: Update $\mathbf W^0$ to $\mathbf W^1$ via
\begin{align}
\dot{\mathbf L}(t) &= \mathbf{X}^{0,T}\mathbf{F}(\mathbf{X}^0\mathbf{L}(t)), \qquad \mathbf L(t_0) = \mathbf{S}^0 \mathbf{W}^{0,T}.\label{eq:LStepSemiDiscreteUI}
\end{align}
Determine $\mathbf W^1$ with $\mathbf L^1 = \mathbf W^1\mathbf{\widetilde R}$ and store $\mathbf N = \mathbf W^{1,T} \mathbf W^0$.
\item \textbf{$S$-step}: Update $\mathbf S^0$ to $\mathbf S^1$ via
\begin{align}
\dot{\mathbf S}(t) = \mathbf{X}^{1,T}\mathbf{F}(\mathbf{X}^{1}\mathbf{S}(t)\mathbf{W}^{1,T})\mathbf{W}^1, \qquad \mathbf S(t_0) &= \mathbf M\mathbf S^0 \mathbf N^T\label{eq:SStepSemiDiscreteUI}
\end{align}
and set $\mathbf S^1 = \mathbf S(t_1)$.
\end{enumerate}
Note that these two integrators take the semi-discrete matrix ODE system \eqref{eq:schemeMatrixNotationSemiDiscrete} as a starting point to derive DLRA evolution equations. I.e., the evolution equations are derived after performing the spatial discretization. Following \cite{EiL18}, the DLRA evolution equations can also be derived for the continuous problem first, and the spatial discretization is performed on the DLRA equations second. Note that in our case, we are starting from a large system of partial differential equations \eqref{eq:advectionSystem}, which can result from a discretization of the directional or uncertain domain of transport equations or linear equations with uncertainty. In our analysis, only the discretization of the spatial domain is important, which is why it does not matter whether the original problem is the P$_N$ (as well as stochastic-Galerkin) system or the scalar transport equation (or uncertain linear equation). Starting at a system of the form $\eqref{eq:advectionSystem}$, the low-rank solution ansatz is
\begin{align}\label{eq:rankrsolcont}
\mathbf u_r(t,x)=\sum_{j,\ell=1}^r X_j(t,x)S_{j\ell}(t)\mathbf W_{\ell}(t).
\end{align}
Note that we now have basis functions $X_j:\mathbb{R}_+\times \mathbb{R}\rightarrow \mathbb{R}$ and $\mathbf W_{\ell}:\mathbb{R}_+\rightarrow \mathbb{R}^{N+1}$. The corresponding split equations \eqref{eq:projectorSplitEq} become
\begin{subequations}\label{eq:projSplittingContProblem}
\begin{alignat}{2}
\partial_t\mathbf u_{\RomanNumeralCaps{1}}(t,x) =& -\left(\mathbf{A}\partial_x \mathbf u_{\RomanNumeralCaps{1}}(t,x)\right)\mathbf{W}\mathbf{W}^T \quad && \mathbf u_{\RomanNumeralCaps{1}}(t_0,x) = \mathbf u_r(t_0,x),\\
\partial_t\mathbf u_{\RomanNumeralCaps{2}}(t,x) =&P_{X}\left(\mathbf{A}\partial_x \mathbf u_{\RomanNumeralCaps{2}}(t,x)\right)\mathbf{W}\mathbf{W}^T, \quad && \mathbf u_{\RomanNumeralCaps{2}}(t_0,x) = \mathbf u_{\RomanNumeralCaps{1}}(t_1,x),\\
\partial_t\mathbf u_{\RomanNumeralCaps{3}}(t,x) =& -P_{X}\left(\mathbf{A}\partial_x \mathbf u_{\RomanNumeralCaps{3}}(t,x)\right), \quad && \mathbf u_{\RomanNumeralCaps{3}}(t_0,x) = \mathbf u_{\RomanNumeralCaps{2}}(t_1,x).
\end{alignat}
\end{subequations}
where we have $P_{X}g := \sum_{i=1}^{r} \langle g, X_i(t,\cdot)\rangle X_i$ and we choose $\langle\cdot,\cdot\rangle$ to denote the $L^2$ inner product with respect to space. Furthermore, we will use the notation $\langle \cdot\rangle$ to indicate an integration over the spatial domain. Then, when collecting the spatial basis functions in the vector $\mathbf{X}(t,x) = (X_j(t,x))_{j=1}^r$ and storing the vectors $\mathbf{W}_{\ell}$ as columns of the matrix $\mathbf{W}\in\mathbb{R}^{(N+1)\times r}$, the corresponding $K$, $S$ and $L$-equations read
\begin{subequations}\label{eq:projSplittingContProblemKL}
\begin{align}
\partial_t \mathbf K(t,x) =& -\mathbf{W}^T\mathbf{A}\mathbf W\partial_x \mathbf K(t,x),\\
\dot{\mathbf S}(t) =& \mathbf{W}^T\mathbf{A}\mathbf W\mathbf S(t)^T \langle\partial_x \mathbf X \mathbf X^{T}\rangle  ,\\
\dot{\mathbf L}(t) =& -\mathbf{A}\mathbf L(t)\langle\partial_x \mathbf X\mathbf X^T\rangle .
\end{align}
\end{subequations}
Note that we use dots to indicate the time derivative for ordinary differential equations, whereas a partial time derivative is used for partial differential equations. The continuous formulation of the unconventional integrator takes the same $K$, $S$ and $L$ steps, but uses a different ordering. Note that the formulation \eqref{eq:projSplittingContProblemKL} is continuous in space and requires a spatial discretization in order to evolve the system numerically in time. Compared to deriving the DLRA equations for the disrcete matrix ODE, this formulation provides more freedom in the choice of discretizations of each individual equation. At the same time, choosing such a discretization requires a profound understanding of the stability related to this set of equations. 

\section{$L^2$-stability analysis for the matrix projector-splitting integrator}\label{sec:L2StabilityProjectorSplitting}
\subsection{Discrete dynamical low-rank approximation}\label{sec:L2StabilityDLRAdiscretePS}\label{sec:L2StabilityDLRAdiscrete}
In the following, we apply the projector-splitting integrator to the matrix ordinary differential equation \eqref{eq:schemeMatrixNotationSemiDiscrete}. Note that this corresponds to discretizing the full problem first and deriving the DLRA equations second. The $K$, $S$ and $L$ steps from equations \eqref{eq:KStepSemiDiscrete}, \eqref{eq:SStepSemiDiscrete} and \eqref{eq:LStepSemiDiscrete} in combination with an explicit Euler time discretization then read
\begin{subequations}\label{eq:projSplittingMatrixNotationKSL}
\begin{alignat}{2}
\mathbf{K}^{n+1} =& \mathbf{K}^{n} + \left((\mathbf L^{(1)}-\mathbf I)\mathbf u^{n}_{\RomanNumeralCaps{1}}-\mathbf L^{(2)}\mathbf u^{n}_{\RomanNumeralCaps{1}}\mathbf{A}^T\right)\mathbf{W}^n, \quad &&{\mathbf{K}^{n+1} = \mathbf{X}^{n+1}\mathbf{\widetilde S}^{n}},\\
\mathbf{\widetilde S}^{n+1} =& \mathbf{\widetilde S}^{n}- \mathbf{X}^{n+1,T}\left((\mathbf L^{(1)}-\mathbf I)\mathbf u^{n}_{\RomanNumeralCaps{2}}-\mathbf L^{(2)}\mathbf u^{n}_{\RomanNumeralCaps{2}}\mathbf{A}^T\right)\mathbf{W}^n,\label{eq:projSplittingMatrixNotationS}\\
\mathbf{L}^{n+1} =& \mathbf{L}^{n}+ \mathbf{X}^{n+1,T}\left((\mathbf L^{(1)}-\mathbf I)\mathbf u^{n}_{\RomanNumeralCaps{3}}-\mathbf L^{(2)}\mathbf u^{n}_{\RomanNumeralCaps{3}}\mathbf{A}^T\right), \quad &&\mathbf{L}^{n+1} = \mathbf{S}^{n+1}\mathbf{W}^{n+1,T}.
\end{alignat}
\end{subequations}
Here, we make use of the DLRA substeps $\mathbf u^{n}_{\RomanNumeralCaps{1}} = \mathbf X^{n}\mathbf{S}^n\mathbf{W}^{n,T}$, $\mathbf u^{n}_{\RomanNumeralCaps{2}} = \mathbf X^{n+1}\mathbf{\widetilde S}^n\mathbf{W}^{n,T}$ and $\mathbf u^{n}_{\RomanNumeralCaps{3}} = \mathbf X^{n+1}\mathbf{\widetilde S}^{n+1}\mathbf{W}^{n,T}$. To underline similarities of the stability analysis to the full problem (cf. Section~\ref{sec:L2StabilitySchemeFull}), let us go one step back to the corresponding split equations \eqref{eq:projectorSplitEq}, which read
\begin{subequations}\label{eq:projSplittingMatrixNotation}
\begin{align}
\mathbf u_{\RomanNumeralCaps{1}}^{n+1} =& \left(\mathbf L^{(1)}\mathbf u^{n}_{\RomanNumeralCaps{1}}-\mathbf L^{(2)}\mathbf u^{n}_{\RomanNumeralCaps{1}}\mathbf{A}^T\right)\mathbf{W}^n\mathbf{W}^{n,T},\\
\mathbf u_{\RomanNumeralCaps{2}}^{n+1} =&- \mathbf{X}^{n+1}\mathbf{X}^{n+1,T}\left((\mathbf L^{(1)}-2\mathbf I)\mathbf u^{n}_{\RomanNumeralCaps{2}}-\mathbf L^{(2)}\mathbf u^{n}_{\RomanNumeralCaps{2}}\mathbf{A}^T\right)\mathbf{W}^n\mathbf{W}^{n,T},\\
\mathbf u_{\RomanNumeralCaps{3}}^{n+1} =& \mathbf{X}^{n+1}\mathbf{X}^{n+1,T}\left(\mathbf L^{(1)}\mathbf u^{n}_{\RomanNumeralCaps{3}}-\mathbf L^{(2)}\mathbf u^{n}_{\RomanNumeralCaps{3}}\mathbf{A}^T\right).
\end{align}
\end{subequations}
Omitting Roman indices, the solution of every substep in \eqref{eq:projSplittingMatrixNotation} is of the form $\mathbf{u}^n = \mathbf{E}_x\mathbf{\tilde u}^n\mathbf{W}^{n,T}$, where $\mathbf{\tilde u}^n \in\mathbb{R}^{N_x\times r}$. This is easily shown as every substep is of the form $\mathbf u = \mathbf X \mathbf S \mathbf W^T$. We thus have $\mathbf u = \mathbf{E}_x\mathbf{E}_x^H\mathbf{X}\mathbf S\mathbf W^T$. Therefore, one can choose $\mathbf{\tilde u}_{\RomanNumeralCaps{1}}^n = \mathbf{E}_x^H\mathbf{X}^n\mathbf S^n$, $\mathbf{\tilde u}_{\RomanNumeralCaps{2}}^n = \mathbf{E}_x^H\mathbf{X}^{n+1}\mathbf{\widetilde S}^n$ and $\mathbf{\tilde u}_{\RomanNumeralCaps{3}}^n = \mathbf{E}_x^H\mathbf{X}^{n+1}\mathbf{\widetilde S}^{n+1}$. Then, the spatial discretization matrices $\mathbf{L}^{(1)}$ and $\mathbf{L}^{(2)}$ can be Fourier transformed according to \eqref{eq:FourierResponseSpatialScheme}, which gives
\begin{align*}
\mathbf u_{\RomanNumeralCaps{1}}^{n+1} =&\mathbf{E}_x\left(\mathbf D^{(1)}\mathbf{\tilde{u}}^{n}_{\RomanNumeralCaps{1}}\mathbf{W}^{n,T}-\mathbf D^{(2)}\mathbf{\tilde{u}}^n_{\RomanNumeralCaps{1}}\mathbf{W}^{n,T}\mathbf{A}^T\right)\mathbf{W}^n\mathbf{W}^{n,T},\\
\mathbf u_{\RomanNumeralCaps{2}}^{n+1} =& -\mathbf{X}^{n+1}\mathbf{X}^{n+1,T}\mathbf{E}_x\left((\mathbf D^{(1)}-2\mathbf I)\mathbf{\tilde{u}}^{n}_{\RomanNumeralCaps{2}}\mathbf{W}^{n,T}-\mathbf D^{(2)}\mathbf{\tilde{u}}^{n}_{\RomanNumeralCaps{2}}\mathbf{W}^{n,T}\mathbf{A}^T\right)\mathbf{W}^n\mathbf{W}^{n,T},\\
\mathbf u_{\RomanNumeralCaps{3}}^{n+1} =& \mathbf{X}^{n+1}\mathbf{X}^{n+1,T}\mathbf{E}_x\left(\mathbf D^{(1)}\mathbf{\tilde{u}}^{n}_{\RomanNumeralCaps{3}}\mathbf{W}^{n,T}-\mathbf D^{(2)}\mathbf{\tilde{u}}^{n}_{\RomanNumeralCaps{3}}\mathbf{W}^{n,T}\mathbf{A}^T\right).
\end{align*}
If we define $\mathbf{\tilde{A}}:=\mathbf{W}^{n,T}\mathbf A^T\mathbf{W}^n$, this simplifies to
\begin{subequations}\label{eq:projSplittingFourier}
\begin{align}
\mathbf u_{\RomanNumeralCaps{1}}^{n+1} =&\mathbf{E}_x\left(\mathbf D^{(1)}\mathbf{\tilde{u}}^{n}_{\RomanNumeralCaps{1}}-\mathbf D^{(2)}\mathbf{\tilde{u}}^n_{\RomanNumeralCaps{1}}\mathbf{\tilde{A}}\right)\mathbf{W}^{n,T},\label{eq:projSplittingFourier1}\\
\mathbf u_{\RomanNumeralCaps{2}}^{n+1} =& -\mathbf{X}^{n+1}\mathbf{X}^{n+1,T}\mathbf{E}_x\left((\mathbf D^{(1)}-2\mathbf I)\mathbf{\tilde{u}}^{n}_{\RomanNumeralCaps{2}}-\mathbf D^{(2)}\mathbf{\tilde{u}}^{n}_{\RomanNumeralCaps{2}}\mathbf{\tilde{A}}\right)\mathbf{W}^{n,T},\label{eq:projSplittingFourier2}\\
\mathbf u_{\RomanNumeralCaps{3}}^{n+1} =& \mathbf{X}^{n+1}\mathbf{X}^{n+1,T}\mathbf{E}_x\left(\mathbf D^{(1)}\mathbf{\tilde{u}}^{n}_{\RomanNumeralCaps{3}}-\mathbf D^{(2)}\mathbf{\tilde{u}}^{n}_{\RomanNumeralCaps{3}}\mathbf{\tilde{A}}\right)\mathbf{W}^{n,T}.\label{eq:projSplittingFourier3}
\end{align}
\end{subequations}
Now, since we know that the Fourier transform of the projector-splitting integrator takes the form \eqref{eq:projSplittingFourier}, we can now investigate its stability properties.

\begin{theorem}\label{th:L2instability} The application of the projector splitting integrator to the Lax--Friedrichs discretization of \eqref{eq:advectionSystem}, given in equation \eqref{eq:projSplittingMatrixNotation}, is $L^2$-unstable.
\end{theorem}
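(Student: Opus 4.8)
The plan is to prove instability by exhibiting a single initial datum whose $L^2$-norm is amplified by a fixed factor strictly larger than one in one step, for \emph{every} choice of the CFL number $c = \lambda_{\max}(\mathbf A)\Delta t/\Delta x$; this rules out any stabilizing CFL restriction. The starting point is the feature already visible in the Fourier form \eqref{eq:projSplittingFourier2}: the $S$-step carries the symbol $\mathbf D^{(1)}-2\mathbf I$ rather than $\mathbf D^{(1)}$. Since $D^{(1)}_{\alpha\alpha}=\cos(\alpha\pi\Delta x)$, this substep acts through the \emph{anti-diffusive} factor $2-\cos(\alpha\pi\Delta x)$, whose modulus reaches $3$ at the highest representable frequency. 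This sign reversal is precisely the backward (negative) step \eqref{eq:DLR2} of the Lie--Trotter splitting, and it is the mechanism I would isolate: the numerical diffusion of Lax--Friedrichs, which damps in the $K$- and $L$-steps, is run backwards in the $S$-step.

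The difficulty in turning this observation about the symbol into a statement about the full nonlinear scheme is that the three substeps in \eqref{eq:projSplittingMatrixNotation} are coupled through the projections $\mathbf X^{n+1}\mathbf X^{n+1,T}$ and the intermediate QR factorizations, so their individual amplification factors cannot simply be multiplied. I would circumvent this by selecting a datum for which all three substeps decouple and the bases remain stationary. Concretely, take rank $r=1$, let $\mathbf W^0=\mathbf w$ be an arbitrary unit vector, and let $\mathbf X^0=\mathbf x$ be the alternating grid vector $x_j\propto(-1)^j$, normalized to unit length. The elementary facts driving everything are that $\mathbf x$ is a common eigenvector of the stencils $\mathbf L^{(1)}$ and $\mathbf L^{(2)}$ with $\mathbf L^{(1)}\mathbf x=-\mathbf x$ and $\mathbf L^{(2)}\mathbf x=\mathbf 0$, which follow directly from their off-diagonal entries. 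Because $\mathbf L^{(2)}\mathbf x=\mathbf 0$, every term involving $\mathbf A$, and hence every $c$-dependent term, drops out of all three substeps; because $\mathbf L^{(1)}\mathbf x=-\mathbf x$, the spatial direction $\mathbf x$ is reproduced up to sign at every stage, so the QR steps are trivial and one may take $\mathbf X^{n+1}=\mathbf x$, $\mathbf W^{n+1}=\mathbf w$.

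With this datum I would track the single scalar factor through \eqref{eq:projSplittingMatrixNotationKSL}. The $K$-step maps $\mathbf X^0\mathbf S^0\mapsto-\mathbf X^0\mathbf S^0$, so $\vert\widetilde S^0\vert=\vert S^0\vert$ and the norm is preserved. The $S$-step, i.e.\ the forward-Euler update \eqref{eq:projSplittingMatrixNotationS} with the $\mathbf L^{(1)}-2\mathbf I$ symbol, gives $\widetilde S^{1}=\bigl(2-\cos(\alpha\pi\Delta x)\bigr)\widetilde S^{0}=3\,\widetilde S^{0}$, an amplification by exactly $3$. The $L$-step, like the $K$-step, only flips the sign of the surviving factor and preserves the norm. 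Composing the three, $\Vert\mathbf u^{n+1}\Vert_F=3\,\Vert\mathbf u^{n}\Vert_F$. The only bookkeeping required is the tracking of signs in the QR normalizations and the verification that the chosen datum is a genuine real rank-one element of $\mathcal M_r$, which holds for any grid admitting the alternating mode.

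Since this one-step amplification by $3$ is independent of $\Delta t/\Delta x$ and of the spectrum of $\mathbf A$, no CFL condition can render the scheme non-expansive in the Frobenius (hence $L^2$) norm, which establishes the claimed instability. The conceptual crux — and the step I expect to require the most care — is the coupling of the substeps through the projections and QR decompositions, which in general precludes a naive von Neumann argument; the construction resolves this by exploiting the stationarity of the Nyquist mode, so that the whole nonlinear integrator reduces to the scalar recursion above.
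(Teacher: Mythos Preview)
Your proposal is correct and is essentially the same argument as the paper's: both select a rank-one datum at the Nyquist frequency (your real alternating vector $x_j\propto(-1)^j$ is precisely the paper's $\exp(i\bar\alpha\pi x_j)$ with $\cos(\bar\alpha\pi\Delta x)=-1$), use $\mathbf L^{(2)}\mathbf x=\mathbf 0$ to kill the $\mathbf A$- and CFL-dependent terms, observe that the bases stay fixed so the projections trivialize, and obtain the factor $3$ from the $S$-step while the $K$- and $L$-steps only flip signs. The only differences are presentational---you phrase things via eigenvectors of the stencil matrices rather than the Fourier symbols $D^{(1)},D^{(2)}$, and you make the CFL-independence more explicit.
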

\begin{proof} Let us pick a single mode solution $u^n_{jk} = \exp(i\bar{\alpha}\pi x_j)w_k$ with $\bar{\alpha}$ such that $\cos(\bar{\alpha}\pi\Delta x)=-1$. In a more compact notation, we define the vector $\mathbf e_x = \left(\exp(i\alpha\pi x_j)\right)_{j=1}^{N_x}$ and with an arbitrary normalized vector $\mathbf{w}\in\mathbb{R}^{N+1}$, we have $\mathbf{u}^n = \mathbf{e}_x\mathbf{w}^T$. Plugging this into the equations \eqref{eq:projSplittingMatrixNotation} yields for the first step
\begin{align*}
    \mathbf u_{\RomanNumeralCaps{1}}^{n+1} = \left(\mathbf L^{(1)}\mathbf{e}_x\mathbf{w}^T-\mathbf L^{(2)}\mathbf{e}_x\mathbf{w}^T\mathbf{A}^T\right)\mathbf{w}\mathbf{w}^T = \mathbf{e}_x\left( D_{\bar{\alpha}\bar{\alpha}}^{(1)}- D_{\bar{\alpha}\bar{\alpha}}^{(2)}\mathbf{w}^T\mathbf{A}^T\mathbf{w}\right)\mathbf{w}^T = -\mathbf{e}_x\mathbf{w}^T.
\end{align*}
Here, we use that for our choice of the wave number we have $D_{\bar{\alpha}\bar{\alpha}}^{(1)} = -1$ and $D_{\bar{\alpha}\bar{\alpha}}^{(2)}=0$. Hence, the basis remains unchanged and only the coefficient changes its sign. Then for the second step, we have
\begin{align*}
    \mathbf u_{\RomanNumeralCaps{2}}^{n+1} = \mathbf{e}_x\mathbf{e}_x^H\left(\mathbf{e}_x(D_{\bar{\alpha}\bar{\alpha}}^{(1)}-2)\mathbf{w}^T-\mathbf{e}_x D_{\bar{\alpha}\bar{\alpha}}^{(2)}\mathbf{w}^T\mathbf{A}^T\right)\mathbf{w}\mathbf{w}^T = -3\mathbf{e}_x\mathbf{w}^T.
\end{align*}
The last step gives 
\begin{align*}
\mathbf u_{\RomanNumeralCaps{3}}^{n+1} = -3\mathbf{e}_x\mathbf{e}_x^H\left(\mathbf{e}_x D_{\bar{\alpha}\bar{\alpha}}^{(1)}\mathbf{w}^T-\mathbf{e}_x D_{\bar{\alpha}\bar{\alpha}}^{(1)}\mathbf{w}^T\mathbf{A}^T\right) = 3\mathbf{e}_x\mathbf{w}^T.
\end{align*}
Hence for this choice of wave number, the Frobenius norm of the solution is amplified by a factor of $3$, i.e., the scheme is not stable. 
\end{proof}

It is clear from the proof of Theorem \ref{th:L2instability} that the $K$ and $L$ step, equations (\ref{eq:projSplittingMatrixNotation}a) and (\ref{eq:projSplittingMatrixNotation}c) respectively, do not amplify the solution. This is in contrast to the $S$ step, equation (\ref{eq:projSplittingMatrixNotation}b). The reason for this is, as we will explain in the subsequent sections, that the $S$ step in the projector splitting integrates backward in time. Thus, the stabilization imposed by the Lax--Friedrich discretization thus acts as an amplification that leads to an unstable scheme (independent of the time step size).

\subsection{Continuous dynamical low-rank approximation}\label{sec:lowRankThenDiscretize}
Previously, we applied DLRA to the discretized system \eqref{eq:schemeMatrixNotation}. Let us now first apply the DLRA method to the spatially continuous problem \eqref{eq:advectionSystem} and then discretizing the resulting differential equations. This continuous approach has been proposed in \cite{EiL18}. Coupled with an appropriate fully implicit scheme it can be shown to be unconditionally stable \cite{ding2019dynamical}. However, here we are interested in an explicit discretization. In this case the approach comes with the freedom to choose stabilization and derivative approximations in each equation individually. In contrast, when discretizing first and applying DLRA second, the stabilization is fixed and inherited by the discretization of the full problem. While the discretization and stabilization of the full problem is well understood, the gained freedom when applying low-rank first requires additional knowledge on the DLRA system which we aim to establish in this section. 

We look at two discretization strategies for the projector-splitting integrator. The projector-splitting integrator applied to the continuous problem \eqref{eq:advectionSystem} leads to the system \eqref{eq:projSplittingContProblemKL}, which was given by
\begin{subequations}
\begin{align*}
\partial_t \mathbf K(t,x) =& -\mathbf{W}^T\mathbf{A}\mathbf W\partial_x \mathbf K,\\
\dot{\mathbf S}(t) =& \mathbf{W}^T\mathbf{A}\mathbf W\mathbf S^T \langle\partial_x \mathbf X \mathbf X^{T}\rangle  ,\\
\dot{\mathbf L}(t) =& -\mathbf{A}\mathbf L\langle\partial_x \mathbf X\mathbf X^T\rangle .
\end{align*}
\end{subequations}
Now, we wish to discretize the above system. Note that one only has to solve one hyperbolic partial differential equation and two ordinary differential equations. Hence, we only need to perform a finite volume discretization for the $K$ equation. As also observed in \cite{PeMF20}, we do not need to use stabilizing numerical fluxes in the approximation of spatial derivatives in the $S$ and $L$ steps. The reason for this is that the derivatives only enter as averages (i.e.~in integrated form). Here we use the standard central second order difference stencil $\partial_x X_k(t_n,x)\big\vert_{x_j}\approx\frac{1}{2\Delta x}(X_{j+1,k}^n-X_{j-1,k}^n)$ in the last two steps. The numerical scheme then becomes
\begin{subequations}\label{eq:projSplittingFirstDLR}
\begin{align}
    \mathbf{K}^{n+1} &= \mathbf L^{(1)}\mathbf K^{n}-\mathbf L^{(2)}\mathbf K^{n}\mathbf{\tilde{A}}, \\
    \mathbf{\widetilde S}^{n+1} &= \mathbf{\widetilde S}^{n}+\mathbf X^{n+1,T}\mathbf L^{(2)}\mathbf X^{n+1}\mathbf{\widetilde S}^{n}\mathbf{\tilde{A}}, \label{eq:projSplittingFirstDLRS}\\
    \mathbf{L}^{n+1} &= \mathbf L^{n}-\mathbf X^{n+1,T}\mathbf L^{(2)}\mathbf X^{n+1}\mathbf{\widetilde S}^{n+1}\mathbf{W}^{n,T}\mathbf{A}^T.
\end{align}
\end{subequations}
Note that such a discretization has been discussed in \cite{PeMF20,kusch2021DLRUQ}. Let us investigate $L^2$-stability for the above system. Our main result is summarized in
\begin{theorem}\label{th:firstDRL}
Assume that the CFL condition
\[ \lambda_{max}(\mathbf{\tilde{A}})\Delta t/\Delta x \leq 1/\sqrt{3} \] 
holds true. Then, the projector-splitting scheme \eqref{eq:projSplittingFirstDLR} is $L^2$-stable, i.e., 
\begin{align*}
  \Vert \mathbf{u}^{n+1} \Vert_F \leq  \Vert \mathbf{u}^{n} \Vert_F.
\end{align*}
\end{theorem}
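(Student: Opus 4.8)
The plan is to track the Frobenius norm through the three substeps of \eqref{eq:projSplittingFirstDLR}, exploiting that every intermediate factor is orthonormal. Because $\mathbf{X}^{n+1}$ and $\mathbf{W}^{n}$ have orthonormal columns, the QR decompositions give $\Vert\mathbf{u}^{n}\Vert_F=\Vert\mathbf{K}^{n}\Vert_F$, $\Vert\mathbf{u}^{n+1}\Vert_F=\Vert\mathbf{L}^{n+1}\Vert_F$, and $\Vert\mathbf{K}^{n+1}\Vert_F=\Vert\mathbf{\widetilde S}^{n}\Vert_F$, so it suffices to bound the composed map $\mathbf{K}^{n}\mapsto\mathbf{\widetilde S}^{n}\mapsto\mathbf{\widetilde S}^{n+1}\mapsto\mathbf{L}^{n+1}$. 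Two structural facts drive the estimate: first, the central-difference matrix $\mathbf L^{(2)}$ is skew-symmetric, so its Fourier symbol $\mathbf D^{(2)}$ and the compression $\mathbf B:=\mathbf X^{n+1,T}\mathbf L^{(2)}\mathbf X^{n+1}$ have purely imaginary spectra; second, $\mathbf A$ and its compression $\mathbf{\tilde{A}}$ are symmetric with real spectra bounded through the CFL number.

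For the $K$-step I would insert the Fourier ansatz exactly as in Section~\ref{sec:L2StabilityDLRAdiscrete}, using $\mathbf L^{(1)}\mathbf E_x=\mathbf E_x\mathbf D^{(1)}$ and $\mathbf L^{(2)}\mathbf E_x=\mathbf E_x\mathbf D^{(2)}$; row by row this multiplies the Euclidean norm of each Fourier row $\mathbf{\hat K}^{n+1}_{\alpha,:}$ by the familiar factor $\sqrt{\cos^2(\alpha\pi\Delta x)+c^2\sin^2(\alpha\pi\Delta x)}$. For the $S$- and $L$-steps, vectorization turns the updates $\mathbf{\widetilde S}^{n+1}=\mathbf{\widetilde S}^{n}+\mathbf B\mathbf{\widetilde S}^{n}\mathbf{\tilde{A}}$ and $\mathbf L^{n+1}=\mathbf L^{n}-\mathbf B\mathbf{\widetilde S}^{n+1}\mathbf W^{n,T}\mathbf A^T$ into multiplication by $\mathbf I+\mathbf{\tilde{A}}\otimes\mathbf B$ and $\mathbf I-\mathbf A\otimes\mathbf B$. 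Since the Kronecker product of a symmetric and a skew-symmetric matrix is normal, these operators are normal with eigenvalues $1\pm i\mu_k\beta_\ell$, hence spectral norms $\max_{k,\ell}\sqrt{1+\mu_k^2\beta_\ell^2}$, giving factors of the form $\sqrt{1+c^2\sin^2(\cdot)}$.

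The heart of the argument is to combine the three factors rather than bound them separately: individually the $S$- and $L$-steps amplify, so only their product with the dissipative $K$-step can stay below one. I would first make this transparent on a single Fourier mode $\mathbf u^{n}=\mathbf e_x\mathbf w^T$ as in the proof of Theorem~\ref{th:L2instability}. Writing $b=\tfrac{\Delta t}{\Delta x}\sin(\alpha\pi\Delta x)$ and $a=\mathbf w^T\mathbf A\mathbf w$, the three steps multiply the norm by $\sqrt{\cos^2+a^2b^2}$, $\sqrt{1+a^2b^2}$ and $\sqrt{1+b^2\Vert\mathbf A\mathbf w\Vert^2}$. Bounding $a^2\le\Vert\mathbf A\mathbf w\Vert^2\le\lambda_{max}(\mathbf A)^2$ and $b^2\lambda_{max}(\mathbf A)^2\le c^2\sigma$ with $\sigma=\sin^2(\alpha\pi\Delta x)$ and $m=c^2$, the squared amplification is dominated by
\begin{align*}
f(\sigma)=\bigl(1-(1-m)\sigma\bigr)\,(1+m\sigma)^2 .
\end{align*}
Since $f(0)=1$ and $f'(\sigma)=(1+m\sigma)\bigl[(3m-1)-3m(1-m)\sigma\bigr]$, the function is non-increasing on $[0,1]$ precisely when $3m-1\le 0$, i.e.\ $c\le 1/\sqrt{3}$; this is exactly where the stated threshold originates, and it is sharp because $f'(0)=3m-1>0$ forces growth otherwise.

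The main obstacle is upgrading this single-mode computation to a bound valid for all rank-$r$ states, where the projector $\mathbf P_X=\mathbf X^{n+1}\mathbf X^{n+1,T}$ entering the $S$- and $L$-steps couples distinct Fourier modes and blocks a purely mode-by-mode argument. To close this I would retain the per-wave-number decomposition $\mathbf K^{n+1}=\mathbf E_x\mathbf{\hat K}^{n+1}$ from the $K$-step and combine it with the normal-form spectral bounds for the $S$- and $L$-steps through Parseval's identity, so that the three contributions reassemble into the product $f(\sigma)$ with a common $\sigma$; the decisive point is that $\mathbf B$ and the $K$-step symbol are both governed by the single operator $\mathbf L^{(2)}$, which ties their wave-number dependence together. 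Care is additionally required because the $L$-step acts with the full matrix $\mathbf A$ rather than its compression $\mathbf{\tilde{A}}$, so the spectral estimates must be taken with respect to the larger of the two spectra.
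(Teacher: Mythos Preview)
Your plan parallels the paper's proof in architecture---bound the three substeps and check that their product stays below one---and your single-mode computation recovers exactly the polynomial $f(\sigma)=(1-(1-m)\sigma)(1+m\sigma)^2$ together with the sharp threshold $m\le 1/3$. The difference lies in how the $S$- and $L$-steps are handled. The paper never introduces the compressed operator $\mathbf B=\mathbf X^{n+1,T}\mathbf L^{(2)}\mathbf X^{n+1}$ or any Kronecker structure. It rewrites all three substeps in the split-equation form \eqref{eq:projSplittingMatrixNotationFirstDLR} and keeps the common Fourier representation $\mathbf u=\mathbf E_x\tilde{\mathbf u}\,\mathbf W^{n,T}$ throughout (see \eqref{eq:projSplittingFourierNew}); each step then carries the \emph{same} diagonal symbol $D^{(2)}_{\alpha\alpha}$, the spatial projector $\mathbf X^{n+1}\mathbf X^{n+1,T}$ is absorbed via its unit operator norm, and in particular the $L$-step bound is expressed with $\tilde{\mathbf A}$ rather than $\mathbf A$. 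This is what allows the paper to write the total amplification $G(\alpha)=(1+\tilde c^2\sin^2(\alpha\pi\Delta x))\sqrt{\cos^2(\alpha\pi\Delta x)+\tilde c^2\sin^2(\alpha\pi\Delta x)}$ with a single wave number and then analyse it as a degree-three trigonometric polynomial.

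Your Kronecker/normality route creates a genuine gap exactly where you flag it. The eigenvalues $i\beta_\ell$ of $\mathbf B$ are not Fourier symbols: after the QR step the columns of $\mathbf X^{n+1}$ are generic orthonormal vectors, so the $\beta_\ell$ are merely some values in $[-\Delta t/\Delta x,\Delta t/\Delta x]$ with no link to the wave numbers on which $\hat{\mathbf K}^{n+1}$ is concentrated. Your $S$- and $L$-bounds therefore collapse to the \emph{global} spectral norms $\sqrt{1+\lambda_{\max}(\tilde{\mathbf A})^2\lVert\mathbf B\rVert^2}$ and $\sqrt{1+\lambda_{\max}(\mathbf A)^2\lVert\mathbf B\rVert^2}$, which are constants independent of $\alpha$; multiplying these against $\max_\alpha\sqrt{\cos^2+\tilde c^2\sin^2}=1$ gives a factor exceeding one for every $\tilde c>0$. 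The proposed tie ``through $\mathbf L^{(2)}$'' is only the norm inequality $\lVert\mathbf B\rVert\le\lVert\mathbf L^{(2)}\rVert$, not a mode-by-mode correspondence, so the three contributions cannot be reassembled into $f(\sigma)$ at a common $\sigma$ along this path. To match the paper you should drop $\mathbf B$, keep the Fourier representation $\mathbf u=\mathbf E_x\tilde{\mathbf u}\,\mathbf W^{n,T}$ in all three substeps, and treat the projector as a plain contraction, so that the factors $\lvert D^{(1)}_{\alpha\alpha}-D^{(2)}_{\alpha\alpha}\lambda_{\max}(\tilde{\mathbf A})\rvert$ and $\lvert 1\pm D^{(2)}_{\alpha\alpha}\lambda_{\max}(\tilde{\mathbf A})\rvert$ share the same $\alpha$.
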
 
\begin{proof}
We again write \eqref{eq:projSplittingFirstDLR} in terms of $\mathbf{u}_{\RomanNumeralCaps{1}},\mathbf{u}_{\RomanNumeralCaps{2}}$ and $\mathbf{u}_{\RomanNumeralCaps{3}}$. This yields
\begin{subequations}\label{eq:projSplittingMatrixNotationFirstDLR}
\begin{align}
\mathbf u_{\RomanNumeralCaps{1}}^{n+1} =& \left(\mathbf L^{(1)}\mathbf u^{n}_{\RomanNumeralCaps{1}}-\mathbf L^{(2)}\mathbf u^{n}_{\RomanNumeralCaps{1}}\mathbf{A}^T\right)\mathbf{W}^n\mathbf{W}^{n,T},\\
\mathbf u_{\RomanNumeralCaps{2}}^{n+1} =&\mathbf{X}^{n+1}\mathbf{X}^{n+1,T}\left(\mathbf u^{n}_{\RomanNumeralCaps{2}}+\mathbf L^{(2)}\mathbf u^{n}_{\RomanNumeralCaps{2}}\mathbf{A}^T\right)\mathbf{W}^n\mathbf{W}^{n,T},\\
\mathbf u_{\RomanNumeralCaps{3}}^{n+1} =& \mathbf{X}^{n+1}\mathbf{X}^{n+1,T}\left(\mathbf u^{n}_{\RomanNumeralCaps{3}}-\mathbf L^{(2)}\mathbf u^{n}_{\RomanNumeralCaps{3}}\mathbf{A}^T\right).
\end{align}
\end{subequations}
As before, all substeps can be brought into the form $\mathbf{u}^n = \mathbf{E}_x\mathbf{\tilde u}^n\mathbf{W}^{n,T}$, where $\mathbf{\tilde u}^n \in\mathbb{R}^{N_x\times r}$. Then, the split equations \eqref{eq:projSplittingMatrixNotationFirstDLR} become
\begin{subequations}\label{eq:projSplittingFourierNew}
\begin{align}
\mathbf u_{\RomanNumeralCaps{1}}^{n+1} =&\mathbf{E}_x\left(\mathbf{\tilde{u}}^{n}_{\RomanNumeralCaps{1}}-\mathbf D^{(2)}\mathbf{\tilde{u}}^n_{\RomanNumeralCaps{1}}\mathbf{\tilde{A}}\right)\mathbf{W}^{n,T},\label{eq:projSplittingFourier12}\\
\mathbf u_{\RomanNumeralCaps{2}}^{n+1} =& \mathbf{X}^{n+1}\mathbf{X}^{n+1,T}\mathbf{E}_x\left(\mathbf{\tilde{u}}^{n}_{\RomanNumeralCaps{2}}+\mathbf D^{(2)}\mathbf{\tilde{u}}^{n}_{\RomanNumeralCaps{2}}\mathbf{\tilde{A}}\right)\mathbf{W}^{n,T},\label{eq:projSplittingFourier22}\\
\mathbf u_{\RomanNumeralCaps{3}}^{n+1} =& \mathbf{X}^{n+1}\mathbf{X}^{n+1,T}\mathbf{E}_x\left(\mathbf{\tilde{u}}^{n}_{\RomanNumeralCaps{3}}-\mathbf D^{(2)}\mathbf{\tilde{u}}^{n}_{\RomanNumeralCaps{3}}\mathbf{\tilde{A}}\right)\mathbf{W}^{n,T}.\label{eq:projSplittingFourier32}
\end{align}
\end{subequations}
Now, we derive an upper bound for the norm of every substep in \eqref{eq:projSplittingFourierNew}. Let us start with the first step, which gives
\begin{align}\label{eq:estimateStepI}
\Vert \mathbf{u}_{\RomanNumeralCaps{1}}^{n+1} \Vert_F \leq \left\Vert\mathbf{E}_x\right\Vert\left\Vert\mathbf D^{(1)}\mathbf{\tilde{u}}^{n}_{\RomanNumeralCaps{1}}-\mathbf D^{(2)}\mathbf{\tilde{u}}^{n}_{\RomanNumeralCaps{1}}\mathbf{\tilde{A}}\right\Vert_F\cdot\Vert\mathbf{W}^{n,T}\Vert.
\end{align}
Following the derivation of \eqref{eq:FrobeniusNormEstimate}, we have that
\begin{align*}
\left\Vert\mathbf D^{(1)}\mathbf{\tilde{u}}^{n}_{\RomanNumeralCaps{1}}-\mathbf D^{(2)}\mathbf{\tilde{u}}^{n}_{\RomanNumeralCaps{1}}\mathbf{\tilde{A}}\right\Vert_F \leq \max_{\alpha}\left|D^{(1)}_{\alpha\alpha}-D^{(2)}_{\alpha\alpha}\lambda_{max}(\mathbf{\tilde{A}})\right|\cdot\Vert \mathbf{\tilde{u}}_{\RomanNumeralCaps{1}}^{n} \Vert_F.
\end{align*}
Since $\Vert\mathbf{W}^{n,T}\Vert=\left\Vert\mathbf{E}_x\right\Vert=1$ and by Parseval's identity $\Vert \mathbf{u}_{\RomanNumeralCaps{1}}^{n} \Vert_F=\Vert  \mathbf{E}_x\mathbf{\tilde u}^n\mathbf{W}^{n,T}\Vert_F=\Vert\mathbf{\tilde u}^n\Vert_F$ the estimate \eqref{eq:estimateStepI} becomes
\begin{align*}
\Vert \mathbf{u}_{\RomanNumeralCaps{1}}^{n+1} \Vert_F \leq\max_{\alpha}\left|D^{(1)}_{\alpha\alpha}-D^{(2)}_{\alpha\alpha}\lambda_{max}(\mathbf{\tilde{A}})\right|\cdot\Vert \mathbf{u}_{\RomanNumeralCaps{1}}^{n} \Vert_F.
\end{align*}

Proceeding in the same manner for \eqref{eq:projSplittingFourier22} and \eqref{eq:projSplittingFourier32}, we obtain the upper bounds
\begin{align*}
\Vert \mathbf{u}_{\RomanNumeralCaps{1}}^{n+1} \Vert_F \leq&\max_{\alpha}\left|D^{(1)}_{\alpha\alpha}-D^{(2)}_{\alpha\alpha}\lambda_{max}(\mathbf{\tilde{A}})\right|\cdot\Vert \mathbf{u}_{\RomanNumeralCaps{1}}^{n} \Vert_F,\\
\Vert \mathbf{u}_{\RomanNumeralCaps{2}}^{n+1} \Vert_F \leq&\max_{\alpha}\left|1+D^{(2)}_{\alpha\alpha}\lambda_{max}(\mathbf{\tilde{A}})\right|\cdot\Vert \mathbf{u}_{\RomanNumeralCaps{1}}^{n+1} \Vert_F,\\
\Vert \mathbf{u}_{\RomanNumeralCaps{3}}^{n+1} \Vert_F \leq&\max_{\alpha}\left|1-D^{(2)}_{\alpha\alpha}\lambda_{max}(\mathbf{\tilde{A}})\right|\cdot\Vert \mathbf{u}_{\RomanNumeralCaps{2}}^{n+1} \Vert_F.
\end{align*}
The amplification of $\mathbf{u}^{n+1} = \mathbf u_{\RomanNumeralCaps{3}}^{n+1}$ then satisfies 
\begin{align*}
\Vert \mathbf{u}^{n+1} \Vert_F \leq\max_{\alpha}\left|D^{(1)}_{\alpha\alpha}-D^{(2)}_{\alpha\alpha}\lambda_{max}(\mathbf{\tilde{A}})\right|\cdot\max_{\alpha}\left|1-D^{(2)}_{\alpha\alpha}\lambda_{max}(\mathbf{\tilde{A}})\right|\cdot\max_{\alpha}\left|1+D^{(2)}_{\alpha\alpha}\lambda_{max}(\mathbf{\tilde{A}})\right|\cdot\Vert \mathbf{u}^{n} \Vert_F.
\end{align*}
With $\tilde c = \lambda_{max}(\mathbf{\tilde{A}})\Delta t/\Delta x$ we have
\begin{align*}
\left|D^{(1)}_{\alpha\alpha}-D^{(2)}_{\alpha\alpha}\lambda_{max}(\mathbf{\tilde{A}})\right| = \sqrt{\cos^2(\alpha\pi\Delta x)+\tilde c^2\sin^2(\alpha\pi\Delta x)}
\end{align*}
and
\begin{align*}
\left|1\pm D^{(2)}_{\alpha\alpha}\lambda_{max}(\mathbf{\tilde{A}})\right| = \sqrt{1+\tilde c^2\sin^2(\alpha\pi\Delta x)}.
\end{align*}
Thus, we have
    \[  \Vert \mathbf{u}^{n+1} \Vert_F \leq G \Vert \mathbf{u}^{n} \Vert_F \]
with
    \[    \qquad G = \left(1+\tilde c^2\sin^2(\alpha\pi\Delta x)\right) \sqrt{\cos^2(\alpha\pi\Delta x)+\tilde c^2\sin^2(\alpha\pi\Delta x)}. \]
Since $G^2$ is a trigonometric polynomial of degree $3$ we can easily determine the stated bound.

\end{proof}
\begin{remark}
It becomes clear that since the $S$-step goes backward in time, the dampening effects of the spatial discretization (or the diffusion effects arsing from artificial viscosity) will lead to an amplification. This is the case when discretizing first and applying DLRA second. Removing this effect in the $S$-step gives us a stable scheme, as has been shown. We note, however, that the CFL condition is slightly more restrictive compared to what we would expect if no low-rank approximation is performed. To remedy this deficiency is the purpose of the remainder of this section.
\end{remark}

In the following, we derive a discretization of the continuous DLRA formulation \eqref{eq:projSplittingContProblem}. The discretization has the same CFL condition as the original problem and the unconventional integrator (to be discussed in the next section). Let us notice that the framework of performing the dynamical low-rank approximation first and discretizing second allows us to add stabilization directly into the $S$-step. For the $K$ and $L$ equations, we use Lax-Friedrichs numerical fluxes. In this case, we recover the $K$ and $L$ equations from the discretize first ansatz \eqref{eq:projSplittingMatrixNotationKSL}. The stabilized equations read
\begin{subequations}\label{eq:projSplittingFirstDLRStabilized}
\begin{align}
    \mathbf{K}^{n+1} &= \mathbf L^{(1)}\mathbf K^{n}-\mathbf L^{(2)}\mathbf K^{n}\mathbf{\tilde{A}}, \\
    \mathbf{\widetilde S}^{n+1} &= \mathbf{X}^{n+1,T}\mathbf L^{(1)}\mathbf{X}^{n+1}\mathbf{\widetilde S}^{n}+\mathbf X^{n+1,T}\mathbf L^{(2)}\mathbf X^{n+1}\mathbf{\widetilde S}^{n}\mathbf{\tilde{A}}, \\
    \mathbf{L}^{n+1} &= \mathbf{X}^{n+1,T}\mathbf L^{(1)}\mathbf{X}^{n+1}\mathbf{L}^{n}-\mathbf X^{n+1,T}\mathbf L^{(2)}\mathbf X^{n+1}\mathbf{L}^{n}\mathbf{A}^T.
\end{align}
\end{subequations}
The main difference to the previously discussed $S$-step discretization \eqref{eq:projSplittingFirstDLRS} is using the term $\mathbf{X}^{n+1,T}\mathbf L^{(1)}\mathbf{X}^{n+1}\mathbf{\widetilde S}^{n}$ instead of $\mathbf{\widetilde S}^{n}$. This term stems from adding a stabilization terms in the finite volume discretization that is used for the $S$-step. Opposed to the $S$-step of the discrete DLRA approach \eqref{eq:projSplittingMatrixNotationS}, we use a negative sign in front of the stabilization term. With $\mathbf u^{n}_{\RomanNumeralCaps{2}} = \mathbf{X}^{n+1,T}\mathbf{\widetilde S}^{n}\mathbf{W}^n$, we hence choose
\begin{align*}
    \mathbf{\widetilde S}^{n+1} =& \mathbf{\widetilde S}^{n}- \mathbf{X}^{n+1,T}\left((\mathbf I - \mathbf L^{(1)})\mathbf u^{n}_{\RomanNumeralCaps{2}}-\mathbf L^{(2)}\mathbf u^{n}_{\RomanNumeralCaps{2}}\mathbf{A}^T\right)\mathbf{W}^n \\
    =& \mathbf{X}^{n+1,T}\left(\mathbf L^{(1)}\mathbf u^{n}_{\RomanNumeralCaps{2}}+\mathbf L^{(2)}\mathbf u^{n}_{\RomanNumeralCaps{2}}\mathbf{A}^T\right)\mathbf{W}^n \\
    =& \mathbf{X}^{n+1,T}\mathbf L^{(1)}\mathbf{X}^{n+1}\mathbf{\widetilde S}^{n}+\mathbf X^{n+1,T}\mathbf L^{(2)}\mathbf X^{n+1}\mathbf{\widetilde S}^{n}\mathbf{\tilde{A}}.
\end{align*}
Since the stabilization term does not affect consistency, changing its sign will preserve consistency of our scheme.
For the presented scheme \eqref{eq:projSplittingFirstDLRStabilized}, we have the following stability result:
\begin{theorem}\label{th:unconventional}
If the CFL condition 
    \[ \lambda_{max}(\mathbf{\tilde{A}})\Delta t/\Delta x \leq 1\]
holds, the projector-splitting scheme \eqref{eq:projSplittingFirstDLRStabilized} is $L^2$-stable, i.e., 
\begin{align*}
  \Vert \mathbf{u}^{n+1} \Vert_F \leq  \Vert \mathbf{u}^{n} \Vert_F.
\end{align*}
\end{theorem}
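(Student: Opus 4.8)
The plan is to recycle the three-substep machinery already used in the proofs of Theorems~\ref{th:L2instability} and \ref{th:firstDRL}; the only new ingredient is that the stabilization turns the leading identity of the $S$- and $L$-steps into $\mathbf L^{(1)}$. First I would rewrite the stabilized scheme \eqref{eq:projSplittingFirstDLRStabilized} in terms of the substeps $\mathbf u_{\RomanNumeralCaps{1}},\mathbf u_{\RomanNumeralCaps{2}},\mathbf u_{\RomanNumeralCaps{3}}$. Using $\mathbf u_{\RomanNumeralCaps{1}}^n=\mathbf K^n\mathbf W^{n,T}$, $\mathbf u_{\RomanNumeralCaps{2}}^n=\mathbf X^{n+1}\mathbf{\widetilde S}^n\mathbf W^{n,T}$ and $\mathbf u_{\RomanNumeralCaps{3}}^n=\mathbf X^{n+1}\mathbf L^n$, together with $\mathbf{\tilde A}=\mathbf W^{n,T}\mathbf A^T\mathbf W^n$ and $\mathbf W^{n,T}\mathbf W^n=\mathbf I$, I expect to obtain
\begin{align*}
\mathbf u_{\RomanNumeralCaps{1}}^{n+1} &= \left(\mathbf L^{(1)}\mathbf u^{n}_{\RomanNumeralCaps{1}}-\mathbf L^{(2)}\mathbf u^{n}_{\RomanNumeralCaps{1}}\mathbf A^T\right)\mathbf W^n\mathbf W^{n,T},\\
\mathbf u_{\RomanNumeralCaps{2}}^{n+1} &= \mathbf X^{n+1}\mathbf X^{n+1,T}\left(\mathbf L^{(1)}\mathbf u^{n}_{\RomanNumeralCaps{2}}+\mathbf L^{(2)}\mathbf u^{n}_{\RomanNumeralCaps{2}}\mathbf A^T\right)\mathbf W^n\mathbf W^{n,T},\\
\mathbf u_{\RomanNumeralCaps{3}}^{n+1} &= \mathbf X^{n+1}\mathbf X^{n+1,T}\left(\mathbf L^{(1)}\mathbf u^{n}_{\RomanNumeralCaps{3}}-\mathbf L^{(2)}\mathbf u^{n}_{\RomanNumeralCaps{3}}\mathbf A^T\right).
\end{align*}
The crucial difference from \eqref{eq:projSplittingMatrixNotationFirstDLR} is that the leading coefficient of $\mathbf u^{n}_{\RomanNumeralCaps{2}}$ and $\mathbf u^{n}_{\RomanNumeralCaps{3}}$ is now $\mathbf L^{(1)}$ rather than $\mathbf I$, which is exactly the stabilization term discussed before the statement.

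Next I would insert the ansatz $\mathbf u^n=\mathbf E_x\mathbf{\tilde u}^n\mathbf W^{n,T}$, diagonalize $\mathbf L^{(1)},\mathbf L^{(2)}$ via \eqref{eq:FourierResponseSpatialScheme}, and estimate each substep exactly as in the derivation of \eqref{eq:FrobeniusNormEstimate}, using $\Vert\mathbf E_x\Vert=\Vert\mathbf W^{n,T}\Vert=1$, the fact that $\mathbf X^{n+1}\mathbf X^{n+1,T}$ is an orthogonal projector, and Parseval's identity. Following the same reduction of $\mathbf A^T$ as in the proof of Theorem~\ref{th:firstDRL}, this yields the three mode-wise amplification factors $\lvert D^{(1)}_{\alpha\alpha}-D^{(2)}_{\alpha\alpha}\lambda_{max}(\mathbf{\tilde A})\rvert$, $\lvert D^{(1)}_{\alpha\alpha}+D^{(2)}_{\alpha\alpha}\lambda_{max}(\mathbf{\tilde A})\rvert$ and $\lvert D^{(1)}_{\alpha\alpha}-D^{(2)}_{\alpha\alpha}\lambda_{max}(\mathbf{\tilde A})\rvert$. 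Because $D^{(1)}_{\alpha\alpha}=\cos(\alpha\pi\Delta x)$ is real while $D^{(2)}_{\alpha\alpha}\lambda_{max}(\mathbf{\tilde A})$ is purely imaginary, each of the three factors collapses — independently of the $\pm$ sign — to the single value $\sqrt{\cos^2(\alpha\pi\Delta x)+\tilde c^2\sin^2(\alpha\pi\Delta x)}$, where $\tilde c=\lambda_{max}(\mathbf{\tilde A})\Delta t/\Delta x$. This is precisely where the stabilization pays off: the $1$ that survived in the $S$- and $L$-steps of Theorem~\ref{th:firstDRL} and produced the amplifying factor $\sqrt{1+\tilde c^2\sin^2}$ is replaced by $\cos^2$.

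Combining the three bounds gives $\Vert\mathbf u^{n+1}\Vert_F\le\bigl(\cos^2(\alpha\pi\Delta x)+\tilde c^2\sin^2(\alpha\pi\Delta x)\bigr)^{3/2}\Vert\mathbf u^{n}\Vert_F$ for the maximizing mode $\alpha$, and under the CFL condition $\tilde c\le 1$ each factor is bounded by $\cos^2(\alpha\pi\Delta x)+\sin^2(\alpha\pi\Delta x)=1$, so the product is $\le 1$ and $L^2$-stability follows directly, with no need for the degree-three trigonometric polynomial argument that forced the restrictive $1/\sqrt3$ bound in Theorem~\ref{th:firstDRL}. The main obstacle, and the only step requiring genuine care, is the first paragraph: one must verify that the chosen sign and placement of the stabilization really produce the leading $\mathbf L^{(1)}$ in all three substeps, and that in the $L$-step the right-multiplication by $\mathbf A^T$ on $\mathbf u_{\RomanNumeralCaps{3}}^n=\mathbf E_x\mathbf{\tilde u}^n_{\RomanNumeralCaps{3}}\mathbf W^{n,T}$ reduces to $\lambda_{max}(\mathbf{\tilde A})$ with the projector $\mathbf X^{n+1}\mathbf X^{n+1,T}$ in place. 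Everything after that is a threefold repetition of the scalar full-problem estimate of Section~\ref{sec:L2StabilitySchemeFull}.
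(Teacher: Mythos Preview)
Your proposal is correct and follows essentially the same route as the paper: you recover the identical split equations with $\mathbf L^{(1)}$ in all three substeps, obtain the same three amplification factors $\lvert D^{(1)}_{\alpha\alpha}\pm D^{(2)}_{\alpha\alpha}\lambda_{max}(\mathbf{\tilde A})\rvert$, and arrive at the cube $\bigl(\cos^2(\alpha\pi\Delta x)+\tilde c^2\sin^2(\alpha\pi\Delta x)\bigr)^{3/2}$, bounded by one under $\tilde c\le 1$. Your closing caveat about the $L$-step (where $\mathbf A^T$ acts without a right $\mathbf W^n$) is apt but harmless: writing $\mathbf u_{\RomanNumeralCaps{3}}^n=\mathbf E_x\mathbf{\tilde u}_{\RomanNumeralCaps{3}}^n\mathbf W^{n,T}$ one bounds $\Vert\mathbf{\tilde u}_{\RomanNumeralCaps{3}}^n\mathbf W^{n,T}\mathbf A^T\Vert_F$ by $\lambda_{max}(\mathbf A)\Vert\mathbf{\tilde u}_{\RomanNumeralCaps{3}}^n\Vert_F$, and since $\lambda_{max}(\mathbf{\tilde A})\le\lambda_{max}(\mathbf A)$ the stated CFL condition with $\mathbf{\tilde A}$ is at worst slightly optimistic; the paper handles this step in the same way.
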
 
\begin{proof}
The proof follows that of Theorem~\ref{th:firstDRL} with the main difference that the split equations corresponding to \eqref{eq:projSplittingFirstDLRStabilized} read
\begin{subequations}
\begin{align*}
\mathbf u_{\RomanNumeralCaps{1}}^{n+1} =& \left(\mathbf L^{(1)}\mathbf u^{n}_{\RomanNumeralCaps{1}}-\mathbf L^{(2)}\mathbf u^{n}_{\RomanNumeralCaps{1}}\mathbf{A}^T\right)\mathbf{W}^n\mathbf{W}^{n,T},\\
\mathbf u_{\RomanNumeralCaps{2}}^{n+1} =&\mathbf{X}^{n+1}\mathbf{X}^{n+1,T}\left(\mathbf L^{(1)}\mathbf u^{n}_{\RomanNumeralCaps{2}}+\mathbf L^{(2)}\mathbf u^{n}_{\RomanNumeralCaps{2}}\mathbf{A}^T\right)\mathbf{W}^n\mathbf{W}^{n,T},\\
\mathbf u_{\RomanNumeralCaps{3}}^{n+1} =& \mathbf{X}^{n+1}\mathbf{X}^{n+1,T}\left(\mathbf L^{(1)}\mathbf u^{n}_{\RomanNumeralCaps{3}}-\mathbf L^{(2)}\mathbf u^{n}_{\RomanNumeralCaps{3}}\mathbf{A}^T\right).
\end{align*}
\end{subequations}
Following the derivation from Theorem~\ref{th:firstDRL}, we have that
\begin{align*}
\Vert \mathbf{u}_{\RomanNumeralCaps{1}}^{n+1} \Vert_F \leq&\max_{\alpha}\left|D^{(1)}_{\alpha\alpha}-D^{(2)}_{\alpha\alpha}\lambda_{max}(\mathbf{\tilde{A}})\right|\cdot\Vert \mathbf{u}_{\RomanNumeralCaps{1}}^{n} \Vert_F,\\
\Vert \mathbf{u}_{\RomanNumeralCaps{2}}^{n+1} \Vert_F \leq&\max_{\alpha}\left|D^{(1)}_{\alpha\alpha}+D^{(2)}_{\alpha\alpha}\lambda_{max}(\mathbf{\tilde{A}})\right|\cdot\Vert \mathbf{u}_{\RomanNumeralCaps{1}}^{n+1} \Vert_F,\\
\Vert \mathbf{u}_{\RomanNumeralCaps{3}}^{n+1} \Vert_F \leq&\max_{\alpha}\left|D^{(1)}_{\alpha\alpha}-D^{(2)}_{\alpha\alpha}\lambda_{max}(\mathbf{\tilde{A}})\right|\cdot\Vert \mathbf{u}_{\RomanNumeralCaps{2}}^{n+1} \Vert_F.
\end{align*}
With $\tilde{c} = \lambda_{max}(\mathbf{\tilde{A}})\Delta t/\Delta x$ this yields
\begin{align}\label{eq:amplificationFactorPSstable}
\Vert \mathbf{u}^{n+1} \Vert_F \leq&\max_{\alpha}\left|D^{(1)}_{\alpha\alpha}-D^{(2)}_{\alpha\alpha}\lambda_{max}(\mathbf{\tilde{A}})\right|^3\cdot\Vert \mathbf{u}^{n} \Vert_F\nonumber\\
=&\left(\cos^2(\alpha\pi\Delta x)+\tilde c^2\sin^2(\alpha\pi\Delta x)\right)^{3/2}\cdot\Vert \mathbf{u}^{n} \Vert_F.
\end{align}
\end{proof}

\section{$L^2$-stability analysis for the unconventional integrator}\label{sec:L2StabilityDLRAdiscreteUnc}
Let us now investigate $L^2$-stability of the unconventional integrator. We apply dynamical low-rank approximation to the fully discretized matrix ODE \eqref{eq:schemeMatrixNotationSemiDiscrete}.
Using an explicit Euler time-discretization, the $K$, $L$ and $S$-steps \eqref{eq:KStepSemiDiscreteUI}, \eqref{eq:LStepSemiDiscreteUI} and \eqref{eq:SStepSemiDiscreteUI} of the unconventional integrator can be written as
\begin{subequations}\label{eq:unconventionalMatrixNotation}
\begin{align}
\mathbf K^{n+1} =& \left(\mathbf L^{(1)}\mathbf u^{n}-\mathbf L^{(2)}\mathbf u^{n}\mathbf{A}^T\right)\mathbf{W}^n,\\
\mathbf L^{n+1} =& \mathbf{X}^{n,T}\left(\mathbf L^{(1)}\mathbf u^{n}-\mathbf L^{(2)}\mathbf u^{n}\mathbf{A}^T\right), \\
\mathbf S^{n+1} =&\mathbf{X}^{n+1,T}\left(\mathbf L^{(1)}\mathbf{\bar u}-\mathbf L^{(2)}\mathbf{\bar u}\mathbf{A}^T\right)\mathbf{W}^{n+1},
\end{align}
\end{subequations}
where $\mathbf u^{n} = \mathbf X^n \mathbf S^n\mathbf W^{n,T}$ and $\mathbf{\bar u} = \mathbf X^{n+1} \mathbf M \mathbf S^n \mathbf N^T\mathbf W^{n+1,T}$. The matrices $\mathbf N$ and $\mathbf M$ are given by $\mathbf M = \mathbf X^{n+1,T}\mathbf X^n$ and $\mathbf N = \mathbf W^{n+1,T}\mathbf{W}^n$ and we obtain $\mathbf X^{n+1}$ and $\mathbf W^{n+1}$ from QR-decompositions of $\mathbf{K}^{n+1}$ and $\mathbf{L}^{n+1}$.

Let us start investigating $L^2$-stability by again rewriting $\mathbf u^{n} = \mathbf E_x \mathbf E_x^H \mathbf X^n \mathbf{S}^n\mathbf W^{n,T}$. As for the projector-splitting integrator, with this ansatz, the input to the first two equations is of the form $\mathbf u^n = \mathbf E_x \mathbf{\tilde u}^n\mathbf W^{n,T}$ with $\mathbf{\tilde u}^n=\mathbf E_x^H \mathbf X^n \mathbf{S}^n$. Furthermore, $\mathbf{\bar u} = \mathbf E_x \mathbf{\tilde u}_S^n\mathbf W^{n+1,T}$ with $\mathbf{\tilde u}_S^n = \mathbf E_x^H\mathbf X^{n+1}\mathbf M \mathbf S^n \mathbf N^T$. In this case, following \eqref{eq:FourierResponseSpatialScheme}, we can write \eqref{eq:unconventionalMatrixNotation} in Fourier space as follows
\begin{align*}
\mathbf K^{n+1} =& \left(\mathbf E_x\mathbf D^{(1)}\mathbf{\tilde u}^n\mathbf{W}^{n,T}-\mathbf E_x\mathbf D^{(2)}\mathbf{\tilde u}^n\mathbf{W}^{n,T}\mathbf{A}^T\right)\mathbf{W}^n,\\
\mathbf L^{n+1} =& \mathbf{X}^{n,T}\left(\mathbf E_x\mathbf D^{(1)}\mathbf{\tilde u}^n\mathbf{W}^{n,T}-\mathbf E_x\mathbf D^{(2)}\mathbf{\tilde u}^n\mathbf{W}^{n,T}\mathbf{A}^T\right), \\
\mathbf S^{n+1} =&\mathbf{X}^{n+1,T}\left(\mathbf E_x\mathbf D^{(1)}\mathbf{\tilde u}_S^n\mathbf W^{n+1,T}-\mathbf E_x\mathbf D^{(2)}\mathbf{\tilde u}_S^n\mathbf W^{n+1,T}\mathbf{A}^T\right)\mathbf{W}^{n+1},
\end{align*}
After a few simplifications and making use of $\mathbf{\tilde{A}}^n :=\mathbf{W}^{n,T}\mathbf{A}^T\mathbf{W}^{n}$, we have
\begin{subequations}\label{eq:unconventionalFourierResponse}
\begin{align}
\mathbf K^{n+1} =& \mathbf E_x\mathbf D^{(1)}\mathbf{\tilde u}^n-\mathbf E_x\mathbf D^{(2)}\mathbf{\tilde u}^n\mathbf{\tilde{A}}^n,\\
\mathbf L^{n+1} =& \mathbf{X}^{n,T}\left(\mathbf E_x\mathbf D^{(1)}\mathbf{\tilde u}^n-\mathbf E_x\mathbf D^{(2)}\mathbf{\tilde u}^n\mathbf{\tilde{A}}^n\right)\mathbf{W}^{n,T}, \\
\mathbf S^{n+1} =&\mathbf{X}^{n+1,T}\left(\mathbf E_x\mathbf D^{(1)}\mathbf{\tilde u}_S^n-\mathbf E_x\mathbf D^{(2)}\mathbf{\tilde u}_S^n\mathbf{\tilde{A}}^{n+1}\right)\label{eq:unconventionalFourierResponseS}.
\end{align}
\end{subequations}
This representation allows an easy verification of the following Theorem:
\begin{theorem}
If the CFL condition
\begin{align*}
    \lambda_{max}(\mathbf{\tilde{A}})\frac{\Delta t}{\Delta x}\leq1
\end{align*}
holds, the unconventional integrator \eqref{eq:unconventionalMatrixNotation} is $L^2$-stable, i.e.,
    \[ \Vert \mathbf{u}^{n+1} \Vert_F  \leq \Vert \mathbf{u}^{n} \Vert_F. \]
\end{theorem}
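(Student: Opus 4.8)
The plan is to exploit the structural fact that the unconventional integrator reconstructs the updated solution as $\mathbf u^{n+1} = \mathbf X^{n+1}\mathbf S^{n+1}\mathbf W^{n+1,T}$ with orthonormal factors $\mathbf X^{n+1}$ and $\mathbf W^{n+1}$ produced by the QR-decompositions. Consequently $\Vert \mathbf u^{n+1}\Vert_F = \Vert \mathbf S^{n+1}\Vert_F$, so the whole estimate collapses to bounding the $S$-step \eqref{eq:unconventionalFourierResponseS}; the $K$- and $L$-steps serve only to generate the new bases and never enter the norm directly. This is precisely the reason the unconventional integrator escapes the instability of Theorem~\ref{th:L2instability}: in contrast to the backward projector-splitting $S$-step, every substep here is integrated forward in time, so the Lax--Friedrichs stabilization dampens rather than amplifies.

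First I would take the Frobenius norm in \eqref{eq:unconventionalFourierResponseS} and use $\Vert\mathbf X^{n+1,T}\Vert = \Vert\mathbf E_x\Vert = 1$ to obtain
\[
\Vert \mathbf S^{n+1}\Vert_F \leq \left\Vert \mathbf D^{(1)}\mathbf{\tilde u}_S^n - \mathbf D^{(2)}\mathbf{\tilde u}_S^n\mathbf{\tilde{A}}^{n+1}\right\Vert_F.
\]
Then, repeating the row-by-row diagonalization of Section~\ref{sec:L2StabilitySchemeFull} (now with $\mathbf{\tilde{A}}^{n+1}$ playing the role of $\mathbf E_{\mu}^H\mathbf A^T\mathbf E_{\mu}$, using that it is a symmetric compression of $\mathbf A$ and hence normal), each Fourier row $\alpha$ contributes a factor $\vert D^{(1)}_{\alpha\alpha} - D^{(2)}_{\alpha\alpha}\lambda_k(\mathbf{\tilde{A}}^{n+1})\vert$, so the right-hand side is bounded by $\max_{\alpha}\vert D^{(1)}_{\alpha\alpha} - D^{(2)}_{\alpha\alpha}\lambda_{max}(\mathbf{\tilde{A}})\vert\cdot\Vert\mathbf{\tilde u}_S^n\Vert_F$.

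Second, I would control the initial data of the $S$-step. Since $\mathbf{\tilde u}_S^n = \mathbf E_x^H\mathbf X^{n+1}\mathbf M\mathbf S^n\mathbf N^T$ with $\mathbf M = \mathbf X^{n+1,T}\mathbf X^n$ and $\mathbf N = \mathbf W^{n+1,T}\mathbf W^n$, and since $\mathbf E_x^H$ and $\mathbf X^{n+1}$ have orthonormal columns while $\Vert\mathbf M\Vert\leq 1$ and $\Vert\mathbf N\Vert\leq 1$ (each being a product of matrices with orthonormal columns), the Frobenius norm is non-expansive under all of these factors. Hence $\Vert\mathbf{\tilde u}_S^n\Vert_F \leq \Vert\mathbf S^n\Vert_F = \Vert\mathbf u^n\Vert_F$. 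Combining the two estimates and writing $\tilde c = \lambda_{max}(\mathbf{\tilde{A}})\Delta t/\Delta x$ yields
\[
\Vert\mathbf u^{n+1}\Vert_F \leq \max_{\alpha}\sqrt{\cos^2(\alpha\pi\Delta x)+\tilde c^2\sin^2(\alpha\pi\Delta x)}\,\Vert\mathbf u^n\Vert_F,
\]
and this amplification factor is bounded by $1$ precisely when $\tilde c\leq 1$, giving the claim.

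The main obstacle I anticipate is not the algebra but justifying the two non-expansiveness facts cleanly: that $\Vert\mathbf u^{n+1}\Vert_F = \Vert\mathbf S^{n+1}\Vert_F$ and that $\mathbf M,\mathbf N$ act as contractions. Both rest on the orthonormality of the QR factors. A secondary point to verify is that $\lambda_{max}(\mathbf{\tilde{A}}^{n+1})\leq\lambda_{max}(\mathbf{\tilde{A}})$ for the compressed matrix at the new time level, which holds since $\mathbf{\tilde{A}}^{n+1}$ is a Rayleigh compression of $\mathbf A^T$ by the orthonormal $\mathbf W^{n+1}$; this ensures the single CFL constant governs the estimate uniformly in $n$. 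Notably, the final amplification factor is the \emph{same} as that of the full problem, so the unconventional integrator recovers the classical CFL condition without the more restrictive $1/\sqrt 3$ bound of Theorem~\ref{th:firstDRL}.
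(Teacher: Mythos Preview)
Your proposal is correct and follows essentially the same approach as the paper: reduce $\Vert\mathbf u^{n+1}\Vert_F$ to $\Vert\mathbf S^{n+1}\Vert_F$ via orthonormality of the QR factors, bound the $S$-step in Fourier space exactly as in Section~\ref{sec:L2StabilitySchemeFull}, and control the initial data $\mathbf{\tilde u}_S^n$ through the contractions $\Vert\mathbf M\Vert,\Vert\mathbf N\Vert\leq 1$. Your explicit remark that $\lambda_{max}(\mathbf{\tilde A}^{n+1})\leq\lambda_{max}(\mathbf A)$ by Rayleigh compression is a detail the paper leaves implicit in its notation $\lambda_{max}(\mathbf{\tilde A})$, but otherwise the arguments coincide.
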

\begin{proof}
The Frobenius norm of the time updated solution is given by
\begin{align*}
    \Vert \mathbf{u}^{n+1} \Vert_F = \left\Vert \mathbf{X}^{n+1}\mathbf S^{n+1}\mathbf W^{n+1,T} \right\Vert_F = \left\Vert \mathbf S^{n+1} \right\Vert_F.
\end{align*}
Taking the norm of \eqref{eq:unconventionalFourierResponseS} gives
\begin{align*}
    \Vert \mathbf S^{n+1} \Vert\leq \Vert \mathbf{X}^{n+1,T}\Vert\cdot\Vert\mathbf E_x\Vert\cdot\max_{\alpha}\left|D^{(1)}_{\alpha\alpha}-D^{(2)}_{\alpha\alpha}\lambda_{max}(\mathbf{\tilde{A}})\right|\cdot\Vert \mathbf{\tilde u}_S^n \Vert_F.
\end{align*}
Furthermore, we have
\begin{align*}
    \Vert \mathbf{\tilde u}_S^n \Vert_F = \Vert \mathbf E_x^H\mathbf{X}^{n+1}\mathbf M \mathbf S^n \mathbf N^T\mathbf W^{n+1,T} \Vert_F \leq& \Vert \mathbf M\Vert\cdot\Vert \mathbf S^n \Vert_F\cdot\Vert\mathbf N^T\Vert \\
    =& \Vert \mathbf X^{n+1,T}\mathbf X^{n}\Vert\cdot\Vert \mathbf W^{n,T}\mathbf W^{n+1}\Vert\cdot\Vert \mathbf S^n \Vert_F \\
    \leq& \Vert \mathbf S^n \Vert_F,
\end{align*}
where we used that $\Vert \mathbf X^{n+1,T}\mathbf X^{n}\Vert\leq \Vert \mathbf X^{n+1,T}\Vert\cdot\Vert \mathbf X^{n}\Vert= 1$. Hence, we obtain 
\begin{align*}
    \Vert \mathbf{u}^{n+1} \Vert_F \leq& \max_{\alpha}\left|D^{(1)}_{\alpha\alpha}-D^{(2)}_{\alpha\alpha}\lambda_{max}(\mathbf{\tilde{A}})\right|\cdot\Vert \mathbf S^n \Vert_F \\
    =&\max_{\alpha}\left|D^{(1)}_{\alpha\alpha}-D^{(2)}_{\alpha\alpha}\lambda_{max}(\mathbf{\tilde{A}})\right|\cdot\Vert \mathbf{u}^{n} \Vert_F.
\end{align*}
Again, we have that
\begin{align}\label{eq:factorAbove}
\left|D^{(1)}_{\alpha\alpha}-D^{(2)}_{\alpha\alpha}\lambda_{max}(\mathbf{\tilde{A}})\right| = \sqrt{\cos^2(\alpha\pi\Delta x)+\tilde c^2\sin^2(\alpha\pi\Delta x)}
\end{align}
with $\tilde c := \lambda_{max}(\mathbf{\tilde{A}})\Delta t/\Delta x$. Hence,
\begin{align}\label{eq:amplificationFactorUnconventional}
\Vert \mathbf{u}^{n+1} \Vert_F \leq&\left(\cos^2(\alpha\pi\Delta x)+\tilde c^2\sin^2(\alpha\pi\Delta x)\right)^{1/2}\cdot\Vert \mathbf{u}^{n} \Vert_F.
\end{align}
To ensure that the factor \eqref{eq:factorAbove} in the above expression is bounded by one, we need to choose $\tilde c\leq 1$, which proves the theorem.
\end{proof}
\begin{remark}\label{rem:dampeningPSvsUnconv}
It is worth noting that the amplification factor in \eqref{eq:amplificationFactorPSstable} will be smaller than the dampening for the unconventional integrator which is given in \eqref{eq:amplificationFactorUnconventional}. This is an advantage of the unconventional integrator, as it adds less artificial diffusion to guarantee a stable scheme. We will discuss this in more detail in section \ref{sec:results}.
\end{remark}

\section{Scattering}\label{sec:scattering}
In this section, we include scattering terms that arise in kinetic transport problems. For this, we perform a dynamical low-rank approximation for the streaming and scattering equations \eqref{eq:streamingScatteringEqns}. I.e., we obtain one set of the $K$, $S$ and $L$ steps for the streaming part and one set for the scattering part. Our approach shares similarities with the method proposed in \cite{ostermann2019convergence}, where stiff and non-stiff parts of the original equation are seperated through a splitting step. It is straightforward to show that the unconventional integrator again provides a stable scheme. Therefore, we directly investigate the matrix projector-splitting integrator. Since we already discussed stability for the streaming equations, we first write down the split equations \eqref{eq:projectorSplitEq} for the scattering equations. To distinguish from the streaming solution, let us use Arabic instead of Roman numbers to denote substeps of the projector-splitting integrator. Using a forward Euler time discretization we have 
\begin{subequations}\label{eq:projSplittingMatrixNotationScattering}
\begin{align}
\mathbf u_{1}^{n+1} =& \mathbf u^{n+1/2}_{1}\left(\mathbf I + \Delta t \mathbf G\right)\mathbf{W}^{n+1/2}\mathbf{W}^{n+1/2,T},\\
\mathbf u_{2}^{n+1} =&\mathbf{X}^{n+1}\mathbf{X}^{n+1,T}\mathbf u^{n+1}_{1}\left(\mathbf I - \Delta t \mathbf G\right)\mathbf{W}^{n+1/2}\mathbf{W}^{n+1/2,T},\\
\mathbf u_{3}^{n+1} =& \mathbf{X}^{n+1}\mathbf{X}^{n+1,T}\mathbf u^{n+1}_{2}\left(\mathbf I + \Delta t \mathbf G\right).
\end{align}
\end{subequations}
Written as a single expression, this gives
\begin{align*}
    \mathbf u_{3}^{n+1} = &\mathbf{X}^{n+1}\mathbf{X}^{n+1,T}\mathbf u^{n+1/2}_{1}\left(\mathbf I + \Delta t \mathbf G\right)\mathbf{W}^{n+1/2}\\
    &\cdot\mathbf{W}^{n+1/2,T}\left(\mathbf I - \Delta t \mathbf G\right)\mathbf{W}^{n+1/2}\mathbf{W}^{n+1/2,T}\left(\mathbf I + \Delta t \mathbf G\right).
\end{align*}
Hence, the amplification is again 
\begin{align*}
\Vert \mathbf u_{3}^{n+1} \Vert_F \leq \max_{\ell}\left|1+\Delta t G_{\ell\ell}\right|^2 \max_{\ell}\left|1-\Delta t G_{\ell\ell}\right|\Vert \mathbf u_{\RomanNumeralCaps{3}}^{n+1/2} \Vert_F.
\end{align*}
This implies that the scheme is stable as long as $\max_{\ell}\vert G_{\ell \ell}\vert \Delta t  \leq 1.62$. We note that this stability constraints is more severe than the explicit Euler scheme applied to the original equation (i.e.~without performing a low-rank approximation).

In the following, we propose a discretization of the scattering part that recovers the classic CFL condition. For this, we go one step back and start from the time-continuous $K$, $S$ and $L$-equations of the scattering step \eqref{eq:projSplittingMatrixNotationScattering} which read
\begin{subequations}
\begin{align}
\mathbf{\dot{L}} =& \mathbf{L}\mathbf G, \quad \mathbf{L}(t_0) = \mathbf{S}^{n+1/2}\mathbf{W}^{n+1/2,T}\\
\mathbf{\dot S} =&-\mathbf{S}\mathbf{W}^{n+1,T}\mathbf G\mathbf{W}^{n+1},\quad \mathbf{S}(t_0)\mathbf{W}^{n+1,T} = \mathbf{L}(t_0+\Delta t)\label{eq:scatteringCont2}\\
\mathbf{\dot{K}} =& \mathbf K\mathbf{W}^{n+1,T}\mathbf G\mathbf{W}^{n+1}, \quad \mathbf{K}(t_0) = \mathbf{X}^{n+1/2}\mathbf{S}(t_0+\Delta t).
\end{align}
\end{subequations}
Note that we now do the $L$-step first and the $K$-step last. Let us use $\mathbf{\tilde G} := \mathbf{W}^{n+1,T}\mathbf G\mathbf{W}^{n+1}$ and multiply \eqref{eq:scatteringCont2} with $\mathbf{X}^{n+1/2}$. Then, since $\mathbf X$ remains constant in the $S$-step, the $S$ and $K$-steps become
\begin{align*}
\mathbf{\dot{\widehat{K}}} =&-\mathbf{\widehat K}\mathbf{\widetilde G},\quad \mathbf{\widehat K}(t_0) = \mathbf{X}^{n+1/2}\mathbf{S}(t_0)\\
\mathbf{\dot{K}} =& \mathbf K\mathbf{\widetilde G}, \quad \mathbf{K}(t_0) = \mathbf{\widehat K}(t_0+\Delta t).
\end{align*}
This system of ODEs can be solved analytically through matrix exponentials
\begin{align*}
\mathbf{K}(t+\Delta t) = e^{\mathbf{\widetilde{G}}\Delta t} \mathbf{K}(t_0) = e^{\mathbf{\widetilde{G}}\Delta t}e^{-\mathbf{\widetilde{G}}\Delta t}\mathbf{\widehat K}(t_0) = \mathbf{\widehat K}(t_0).
\end{align*}
Hence, on a continuous level, the $K$ and $S$-steps cancel each other out and the dynamics is solely given by the $L$-step. Therefore, it is sufficient to only perform the $L$-step for scattering, i.e., scattering only effects the $\mathbf{S}$ and $\mathbf{W}$ factors of the solution. Using an explicit Euler time-discretization, we have
\begin{align*}
\mathbf L^{n+1} =& \mathbf{L}^{n+1/2}\left(\mathbf I + \Delta t \mathbf G\right).
\end{align*}
This gives 
\begin{align*}
\Vert \mathbf u_{3}^{n+1} \Vert_F \leq \max_{\ell}\left|1+\Delta t G_{\ell\ell}\right|\cdot\Vert \mathbf u_{\RomanNumeralCaps{3}}^{n+1/2} \Vert_F,
\end{align*}
and we thus recover the classic stability constraint given by $\max_{\ell}\vert G_{\ell \ell}\vert \Delta t  \leq 2$. For sake of completeness, let us state the full algorithm:
\begin{enumerate}
    \item \textbf{$K$-step streaming}: Update $\mathbf X^{n+1}\leftarrow\mathbf X^{n}$ and $\mathbf{\widetilde{S}}^n\leftarrow \mathbf{S}^n$ via
\begin{align*}
\mathbf K^{n+1} &= \mathbf L^{(1)}\mathbf K^{n} - \mathbf L^{(2)}\mathbf u(t)\mathbf{A}^T\mathbf{W}^n
\end{align*}
Determine $\mathbf X^{n+1}$ and $\mathbf{\widetilde{S}}^n$ with a QR-decomposition $\mathbf K^{n+1} = \mathbf X^{n+1} \mathbf{\widetilde{S}}^n$.
\item \textbf{$S$-step streaming}: Update $\mathbf{\widetilde{S}}^{n+1}\leftarrow\mathbf{\widetilde{S}}^{n}$ via
\begin{align*}
\mathbf{\widetilde S}^{n+1} &= \mathbf{X}^{n+1,T}\mathbf L^{(1)}\mathbf{X}^{n+1}\mathbf{\widetilde S}^{n}+\mathbf X^{n+1,T}\mathbf L^{(2)}\mathbf X^{n+1}\mathbf{\widetilde S}^{n}\mathbf W^{n,T}\mathbf{A}\mathbf W^n
\end{align*}
\item
\begin{itemize}
\item \textbf{$L$-step streaming}: Update $\mathbf L^{n+1/2}\leftarrow\mathbf L^{n}$ via
\begin{align*}
\mathbf{L}^{n+1/2} &= \mathbf{X}^{n+1,T}\mathbf L^{(1)}\mathbf{X}^{n+1}\mathbf{L}^{n}-\mathbf X^{n+1,T}\mathbf L^{(2)}\mathbf X^{n+1}\mathbf{L}^{n}\mathbf{A}^T.
\end{align*}
\item \textbf{$L$-step scattering}: Update to $\mathbf W^{n+1}$ and $\mathbf S^{n+1}$ from $\mathbf{L}^{n+1/2}$ via
\begin{align*}
\mathbf L^{n+1} =& \mathbf{L}^{n+1/2}\left(\mathbf I + \Delta t \mathbf G\right).
\end{align*}
\end{itemize}
Determine $\mathbf W^{n+1}$ and $\mathbf S^{n+1}$ with a QR-decomposition $\mathbf L^{n+1} = \mathbf W^{n+1}\mathbf S^{n+1}$.
\end{enumerate}

Some remarks are in order
\begin{remark}
    The strategy of splitting the original equation before applying dynamical low-rank can be applied in various situations to cancel steps in the projector-splitting integrator and thereby reduce computational costs. As an example, assume that we have an equation
\begin{align*}
    \partial_t u(t,x,y) + \mathcal{L}_1(t,x) u(t,x,y) + \mathcal{L}_2(t,y) u(t,x,y) =0,
\end{align*}
where $\mathcal{L}_1(t,x)$ is a (differential) operator which does not depend on $x$ and $\mathcal{L}_2(t,y)$ is a (differential) operator which does not depend on $y$.
We can split this equation according to
\begin{align*}
    &\partial_t u_1(t,x,y) + \mathcal{L}_1(t,x) u_1(t,x,y)=0\qquad u_1(t_0,x,y) = u(t,x,y)\\
    &\partial_t u_2(t,x,y) + \mathcal{L}_2(t,x) u_2(t,x,y) =0\qquad u_2(t_0,x,y) = u_1(t_1,x,y).
\end{align*}
Applying the projector-splitting integrator to each equation individually will then again only give an update in the $K$-step for the first equation and in the $L$-step for the second equation.
\end{remark}
\begin{remark} The proposed strategy allows for a straightforward implementation of implicit time discretization schemes for the scattering part. Since scattering can be ill-conditioned, this is an often taken approach in radiation transport. This idea has been pointed out in \cite{ostermann2019convergence} for a different setting. Here, the authors split stiff parts from the original differential equation and treat both resulting equations with adequate numerical methods.
\end{remark}

\section{Numerical results}\label{sec:results}
To allow reproducability, the code to compute all numerical results of this work is openly available \cite{code}.
\subsection{Radiation transport}
In the following, we present numerical results for the radiation transport equation, which describes the movement of radiation particles on a mesoscopic level. Particles are moving through a background medium with which they undergo collisions. In a one-dimensional setting, the particle density, also called the angular flux, is denoted by $\psi(t,x,\mu)$. Here, $t\in\mathbb{R}_+$ denotes time, $x\in [x_L,x_R]$ is the spatial variable and $\mu\in[-1,1]$ is the travelling direction of particles, projected onto a one-dimensional domain. When scattering is isotropic, the dynamics of the scalar flux can be determined from the integro-differential equation
\begin{align}\label{eq:radTransport}
    \partial_t \psi(t,x,&\mu) + \mu\partial_x \psi(t,x,\mu) + \sigma_t(x) \psi(t,x,\mu) = \frac{\sigma_s(x)}{2}\phi(t,x),\\
    &\psi(t=0,x,\mu) = \psi_{\text{IC}}(x,\mu)\\ 
    &\psi(t,x_L,\mu) = \psi_L(t,\mu) \enskip \text{ and } \enskip \psi(t,x_R,\mu) = \psi_R(t,\mu).
\end{align}
The scalar flux $\phi$ is given by $\phi(t,x) = \int_{-1}^1 \psi(t,x,\mu) \,d\mu$. Commonly, the directional dependence is represented by a modal discretization. When $P_{\ell}:[-1,1]\rightarrow \mathbb{R}$ are the normalized Legendre polynomials, the modal representation takes the form
\begin{align*}
    \psi(t,x,\mu)\approx \psi_N(t,x,\mu) := \sum_{\ell = 0}^N u_{\ell}(t,x) P_{\ell}(\mu).
\end{align*}
A Galerkin projection of the original system \eqref{eq:radTransport} yields the P$_N$ equations \eqref{eq:PN}. In this work, we study the plane source Ganapol's benchmark test~\cite{ganapol2008analytical}, which is equipped with an analytic solution. Its initial condition is an isotropic dirac distribution in the center of the spatial domain, which in numerical computations is commonly modelled as $\psi(t=0,x,\mu) = \max\{10^{-4},\sfrac{1}{\sqrt{2\pi\delta} }\exp( \sfrac{-x^2}{2\delta})\}$ using a small variance $\delta = 0.03^2$. Numerical investigations for the plane-source test-case have been conducted with dynamical low-rank approximation in \cite{PeMF20,PeM20,ceruti2021rank}. The plane-source test-case is challenging, since solutions to it are prone to numerical artifacts such as ray-effects or oscillations. Classical numerical methods for this type of problem add artificial viscosity to mitigate these spurious artifacts, see e.g. \cite{frank2020ray,mcclarren2010robust}.

As previously discussed, the dynamical low-rank approximation can either be derived for the spatially discretized P$_N$ system or for the continuous problem \eqref{eq:PN}. In the latter case, a discretization must be performed on the derived $K$, $S$ and $L$ equations. Our analysis shows stability of the unconventional integrator for both approaches, whereas the matrix-projector splitting integrator is unstable when being applied to the discretized problem. Scattering is stabilized through the splitting approach presented in Section~\ref{sec:scattering}, which for the matrix projector-splitting integrator allows for an efficient numerical treatment.

We start by studying the plane-source testcase for different integrators with ranks $10$ and $15$ as well as a CFL number of $CFL = \lambda_{max}(\mathbf A)\Delta t/\Delta x=1$. The remaining parameter values are
\begin{center}
    \begin{tabular}{ | l | p{7.0cm} |}
    \hline
    $[x_L,x_R]=[-1.5,1.5]$ & range of spatial domain \\
    $T = 1$ & end time \\
    $N_x=800$ & number of spatial cells \\
    $N+1 = 100$ & expansion coefficients in angle \\
    $\sigma_s = \sigma_t = 1$ & isotropic scattering and total cross section \\
    \hline
    \end{tabular}
\end{center}
Numerical results for these parameters are depicted in Figure~\ref{fig:Figure1} for different integrators. 
\begin{figure}[h!]
\centering
	\begin{subfigure}{0.49\linewidth}
		\centering
		\includegraphics[scale=0.21]{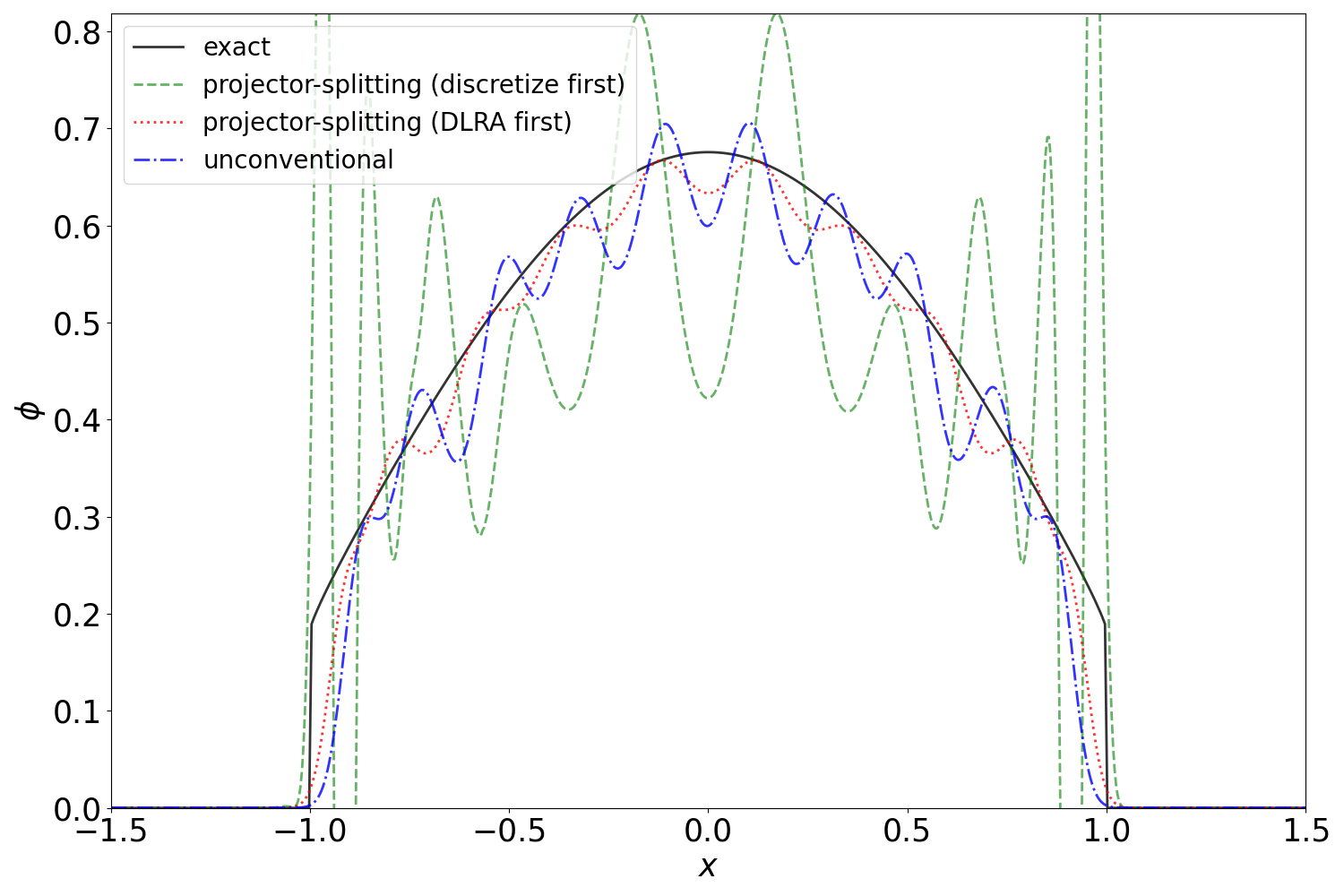}
		\caption{$r=10$}
		\label{fig:studyCFLPNRank10}
	\end{subfigure}
	\begin{subfigure}{0.49\linewidth}
		\centering
		\includegraphics[scale=0.21]{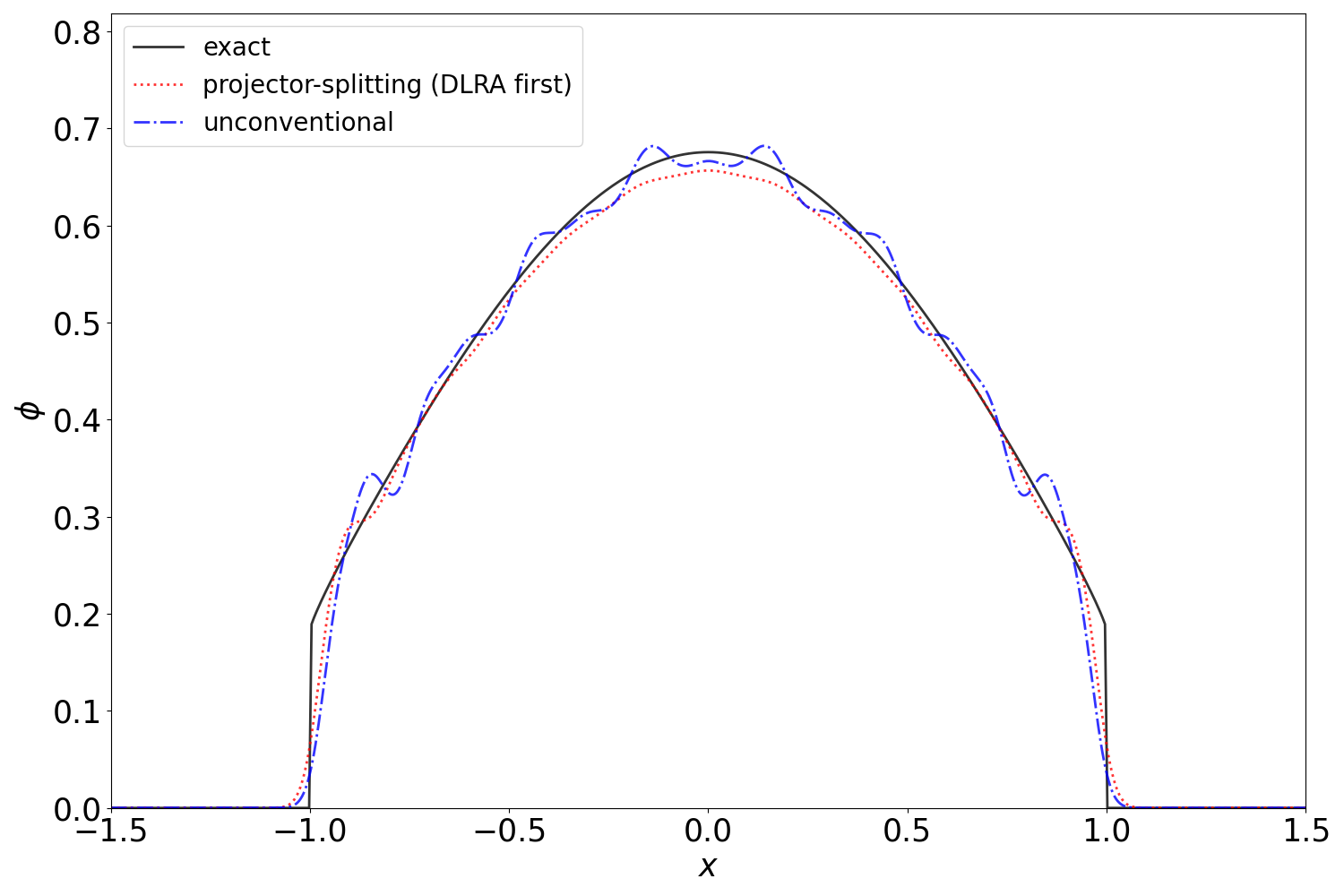}
		\caption{$r=15$}
		\label{fig:studyCFLPNRank15}
	\end{subfigure}
    \caption{Scalar flux for the plane source problem computed with the unconventional integrator, the projector-splitting integrator and its stabilized discretization for ranks $r=10$ and $r=15$. The projector-splitting integrator for the fully discretized problem yields infinite values for rank $r=15$ and is therefore not shown.}
	\label{fig:Figure1}
\end{figure}
As expected, the unconventional integrator when being applied to the fully discretized problem remains stable for this high CFL number. This is not the case for the projector-splitting integrator. In agreement with the results of Theorem~\ref{th:L2instability}, applying the projector-splitting integrator to the matrix ODE which results from discretizing the original problem does not yield an $L^2$-stable scheme. As a result, the DLRA solution when using rank $r=10$ heavily oscillates. For rank $15$, the solution blows up and the method breaks down. In our numerical experiments, we observed cases in which combinations of the matrix ODE sizes $N_x$ and $N$ lead to stable results, even for $CFL=1$. A stable discretization of the $K$, $S$ and $L$ steps of the projector-splitting integrator for the continuous problem is given by \eqref{eq:projSplittingFirstDLRStabilized}. The derived stability of this discretization can be observed in our numerical experiments. Note that this discretization appears to yield the best results of the three discussed integrators and discretizations, especially for rank $r=15$, which nicely matches the analytic solution. This results from the increased dampening of artificial viscosity for the stabilized projector-splitting integrator (cf. Remark~\ref{rem:dampeningPSvsUnconv}). Note that this increased dampening, though being beneficial for the plane-source test case, might not be desired for general problems. The analytically derived $L^2$-stability is further visualized in Figure~\ref{fig:Figure2} which depicts the Frobenius norm of the angular flux $\psi$. In agreement with the derived behaviour, the Frobenius norm is dissipated in time for the unconventional integrator and the stable discretization of the projector-splitting integrator. As expected, the dissipation of the stabilized projector-splitting integrator is stronger than for the unconventional integrator. The projector-splitting integrator when being applied on the matrix ODE of the discretized problem amplifies the norm. For rank $15$, the Frobenius norm reaches an infinite value after a few iterations.
\begin{figure}[h!]
\centering
	\begin{subfigure}{0.49\linewidth}
		\centering
		\includegraphics[scale=0.3]{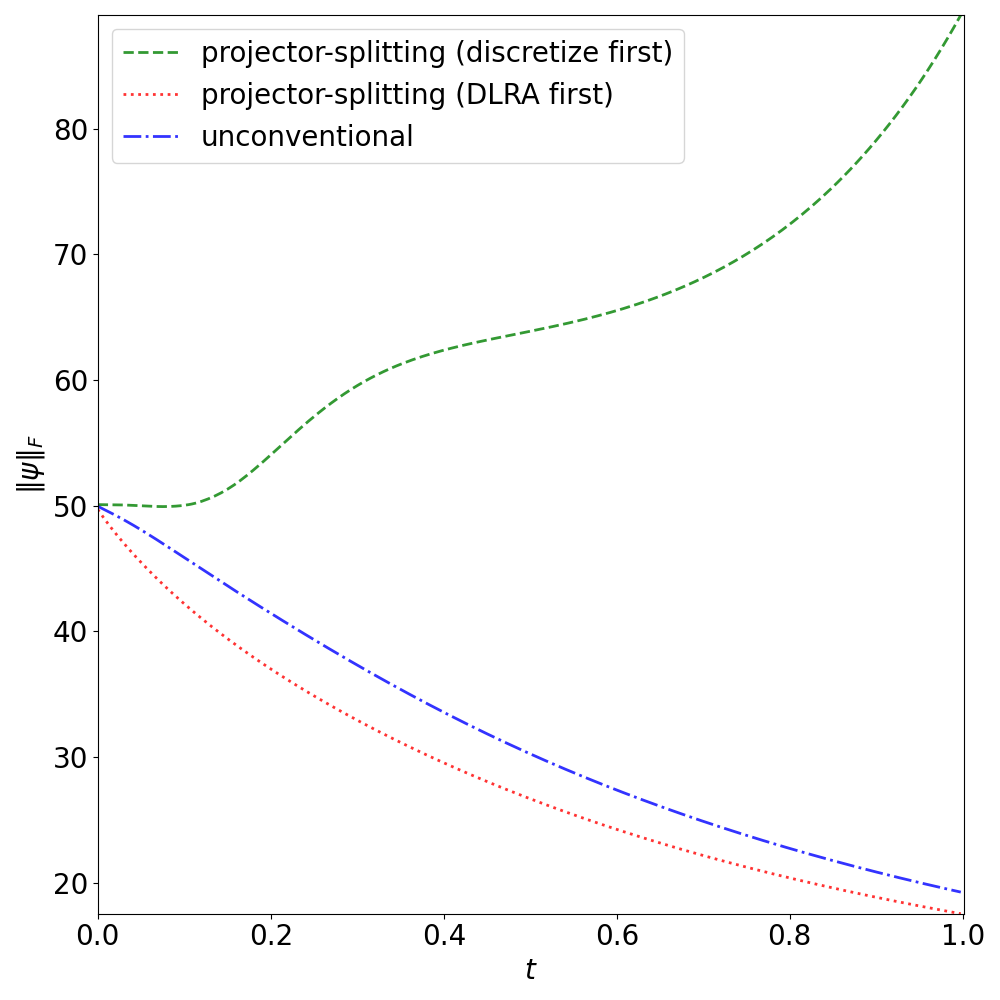}
		\caption{$r=10$}
		\label{fig:studyCFLPNRank10}
	\end{subfigure}
	\begin{subfigure}{0.49\linewidth}
		\centering
		\includegraphics[scale=0.3]{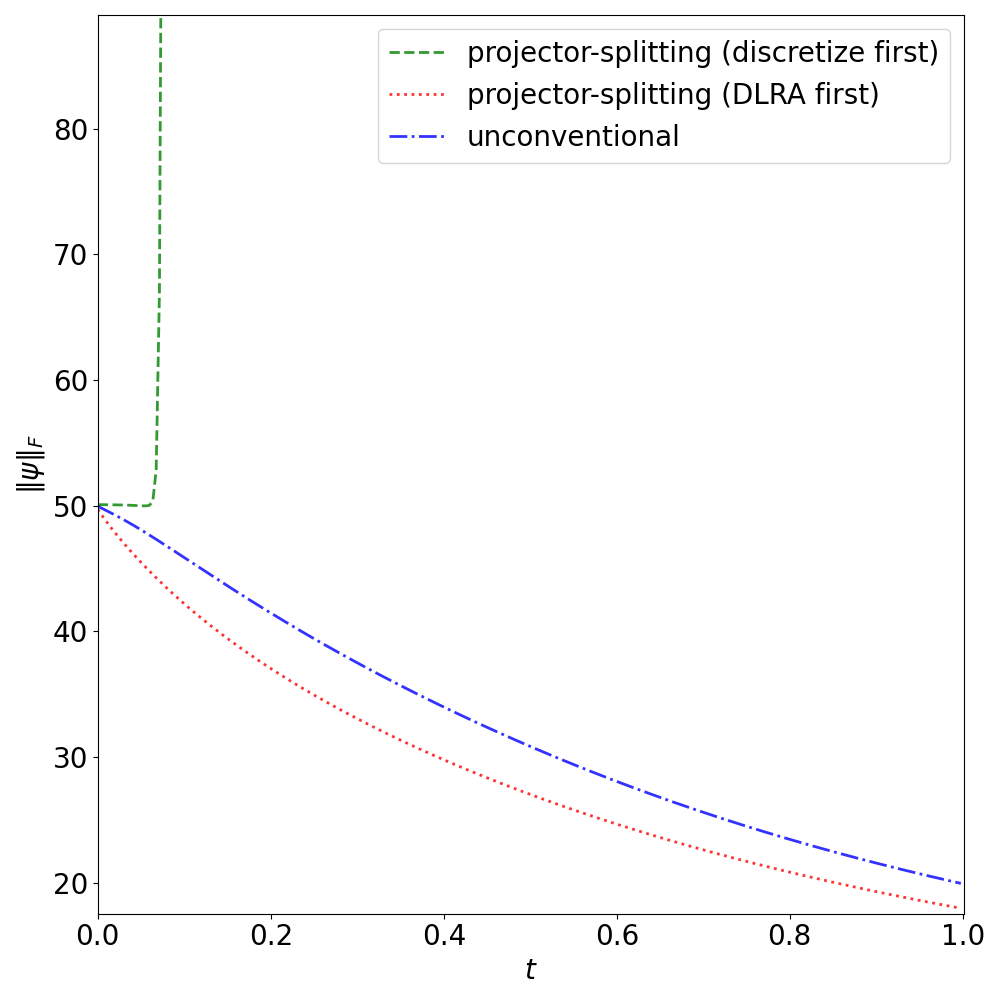}
		\caption{$r=15$}
		\label{fig:studyCFLPNRank15}
	\end{subfigure}
	\caption{Time evolution of the Frobenius norm for ranks $r=10$ and $r=15$ during the computation of the plane source problem.}
	\label{fig:Figure2}
\end{figure}

Lastly, we demonstrate the behavior of the three strategies for varying CFL numbers. For this, we plot the distance of the numerical solution 
\begin{align*}
\phi_{\Delta}(T,x_j) = \sum_{i,m = 1}^r X_i(T,x_j)S_{im}(T) W_{0 m}(T),
\end{align*}
collected in $\mathbf{\phi}_{\Delta}\in\mathbb{R}^{N_x}$ to the analytic reference solution $\mathbf\phi_{\text{ref}} = (\phi_{\text{ref}}(T,x_1),\cdots,\phi_{\text{ref}}(T,x_{N_x}))^T$. The $L^2$-distance is then given by $\Vert \mathbf{\phi}_{\Delta}-\mathbf{\phi}_{\text{ref}} \Vert_{F}$. The behaviour for different CFL numbers when using $r=10$ and $r=15$ is shown in Figure~\ref{fig:Figure3}.
\begin{figure}[h!]
\centering
	\begin{subfigure}{0.49\linewidth}
		\centering
		\includegraphics[scale=0.3]{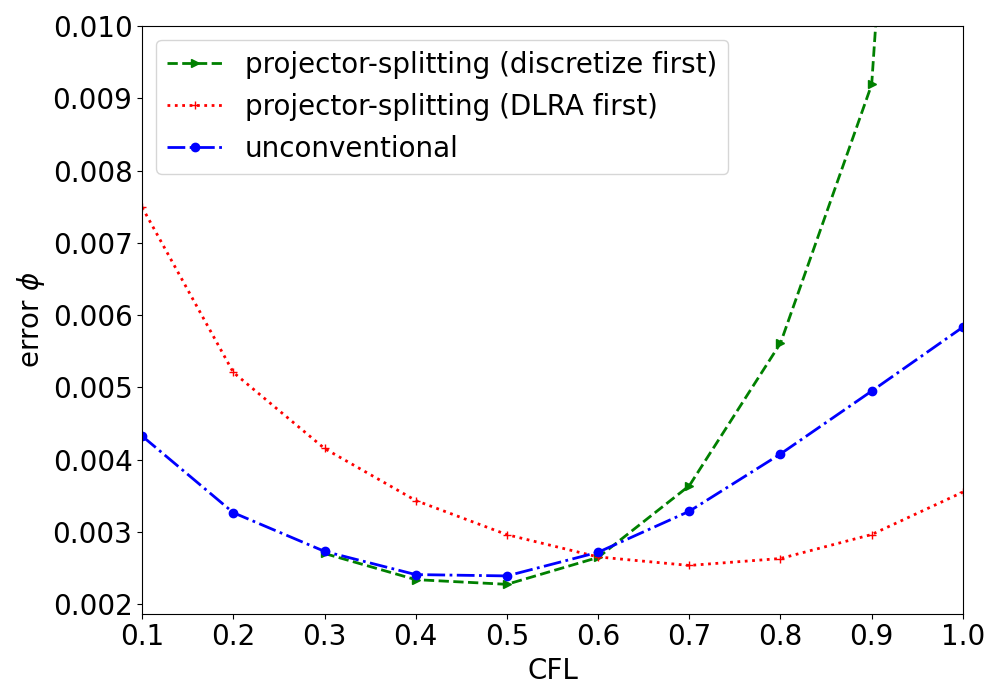}
		\caption{$r=10$}
		\label{fig:Figure3a}
	\end{subfigure}
	\begin{subfigure}{0.49\linewidth}
		\centering
		\includegraphics[scale=0.3]{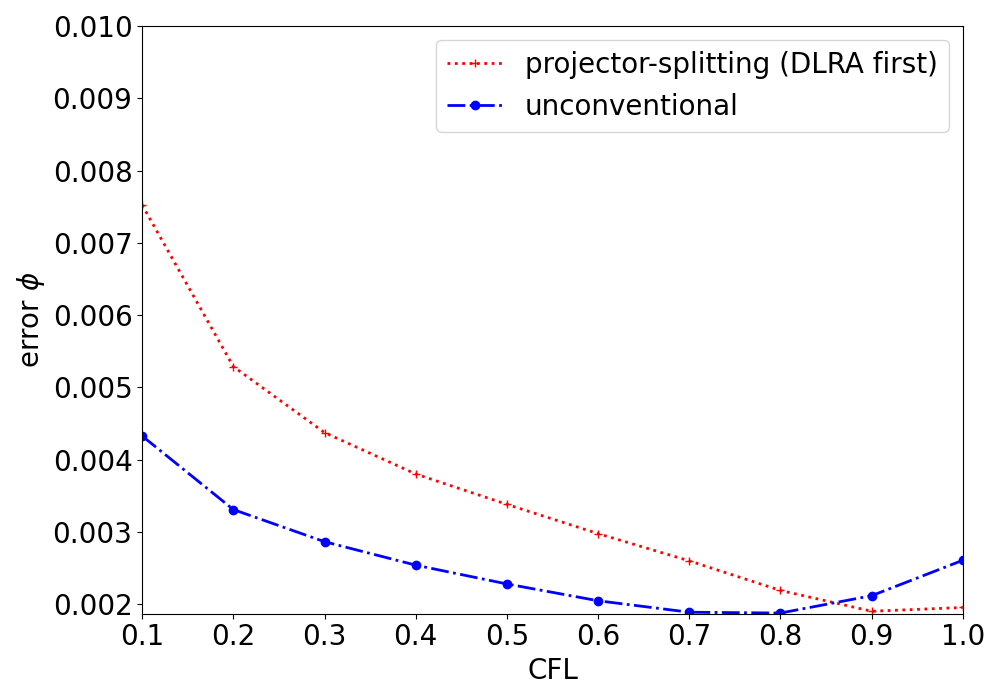}
		\caption{$r=15$}
		\label{fig:Figure3b}
	\end{subfigure}
    \caption{CFL study for the plane source problem.}
	\label{fig:Figure3}
\end{figure}

\subsection{Uncertainty Quantification}
In this section, we investigate the hyperbolic advection equation with uncertain speed (which is also called the random wave equation \cite{gottlieb2008galerkin})
\begin{subequations}\label{eq:advectionEquation}
\begin{align}
    &\partial_t u(t,x,\xi) + a(\xi)\partial_x u(t,x,\xi) = 0,\\
    &u(t=0,x,\xi) = u_{\text{IC}}(x,\xi),\\ 
    &u(t,x_L,\xi) = u_L(t,\xi) \enskip \text{ and } \enskip u(t,x_R,\xi) = u_R(t,\xi).
\end{align}
\end{subequations}
The random variable $\xi$ is uniformly distributed in the interval $[0.2, 1]$ and we choose an uncertain advection speed $a(\xi) = \xi^3$. We pick a deterministic initial condition $u_{\text{IC}}(x,\xi) = \chi_{[-1,0]}$ as well as Dirichlet boundary conditions $u_L = u_R = 0$. A common choice to discretize this system are general polynomial chaos (gPC) basis functions \cite{wiener1938homogeneous,xiu2002wiener}, which in our case are the Legendre polynomials $P_{\ell}$. Then, the solution ansatz takes the form
\begin{align*}
 u(t,x,\xi)\approx \sum_{\ell=0}^N u_{\ell}(t,x)P_{\ell}(\xi).   
\end{align*}
A system of equations describing the time evolution of the gPC expansion coefficients $\mathbf{u} = (u_0,\cdots,u_N)^T$ can be derived with the help of the stochastic-Galerkin (SG) method. Similar to the P$_N$ system, the SG moment system is derived by testing the original problem \eqref{eq:advectionEquation} against the gPC basis functions. The resulting SG system reads
\begin{align}\label{eq:SG}
\partial_t \mathbf u(t,x) = -\mathbf A\partial_x \mathbf u(t,x),
\end{align}
where $\mathbf{A} = (a_{\ell m})_{\ell,m=0}^N$ and $a_{\ell m}=\mathbb{E}[a P_{\ell}P_m]$. Again, a low-rank solution ansatz is chosen and the solution approximation is evolved in time using a dynamical low-rank approximation. In uncertainty quantification one is commonly interested in the standard deviation of the solution, which heavily depends on a finely resolved spatial domain. Therefore, the number of spatial cells is chosen to be $N_x = 2000$. The random domain is discretized with $N+1 = 100$ modal expansion coefficients. All remaining parameter values are chosen as for the radiation transport problem. We start with investigating the solution approximation for a CFL number of one and ranks $5$ and $10$. When deriving the evolution equations of the matrix projector-splitting integrator for the spatially discretized problem at rank $r=5$, we observe an oscillatory approximation for the expectation in Figure~\ref{fig:Figure4a} as well as for the standard deviation in Figure~\ref{fig:Figure5a}. For rank $10$, the projector-splitting integrator for the matrix ODE diverges. The stable discretization of the integrator when deriving the DLRA evolution equations on a continuous level yields finite results for all ranks. Due to its increased dampening compared to the unconventional integrator (cf. Remark~\ref{rem:dampeningPSvsUnconv}), the numerical solution smears out. Improved solution approximations are obtained with the unconventional integrator.
\begin{figure}[h!]
\centering
	\begin{subfigure}{0.49\linewidth}
		\centering
		\includegraphics[scale=0.21]{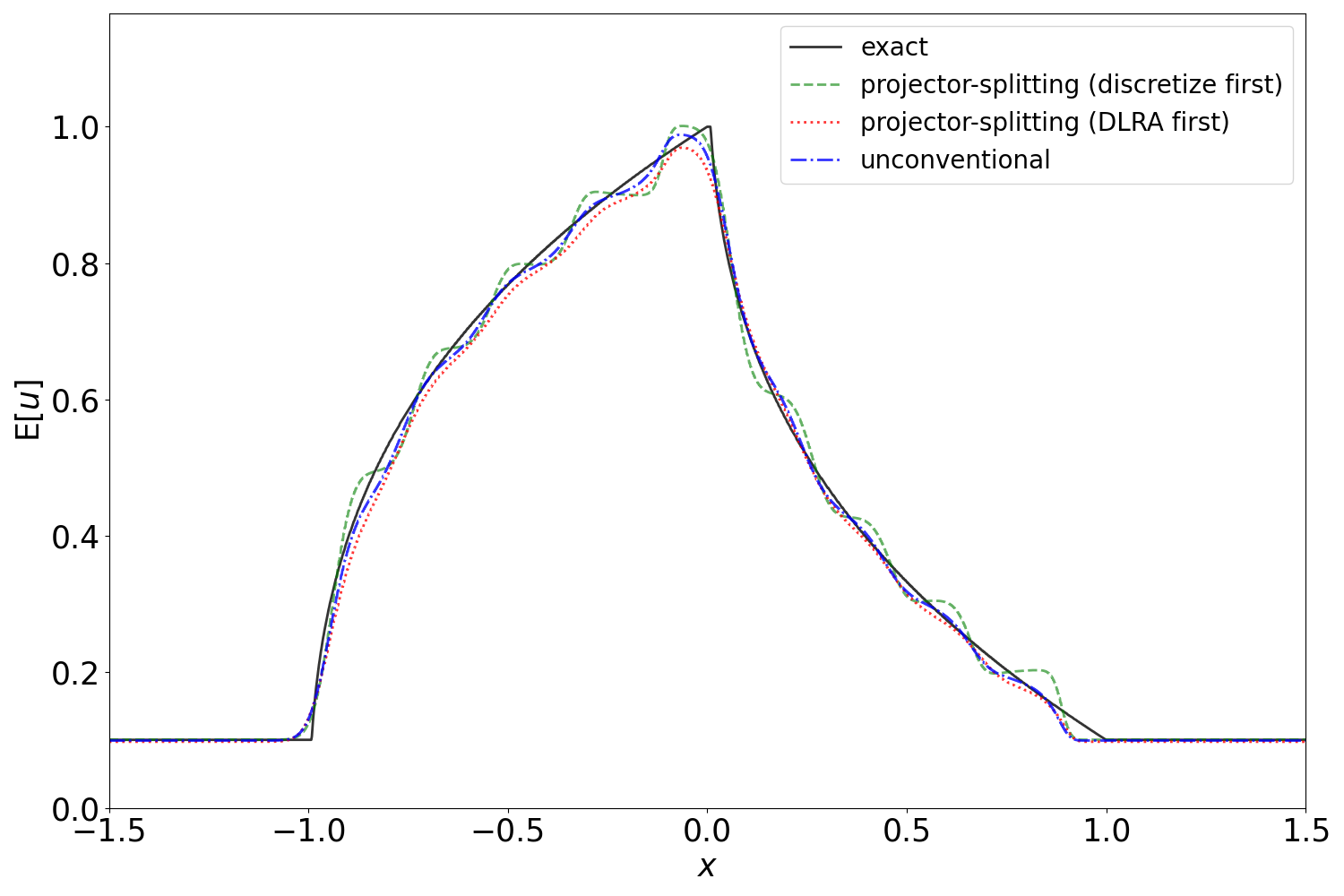}
		\caption{$r=5$}
		\label{fig:Figure4a}
	\end{subfigure}
	\begin{subfigure}{0.49\linewidth}
		\centering
		\includegraphics[scale=0.21]{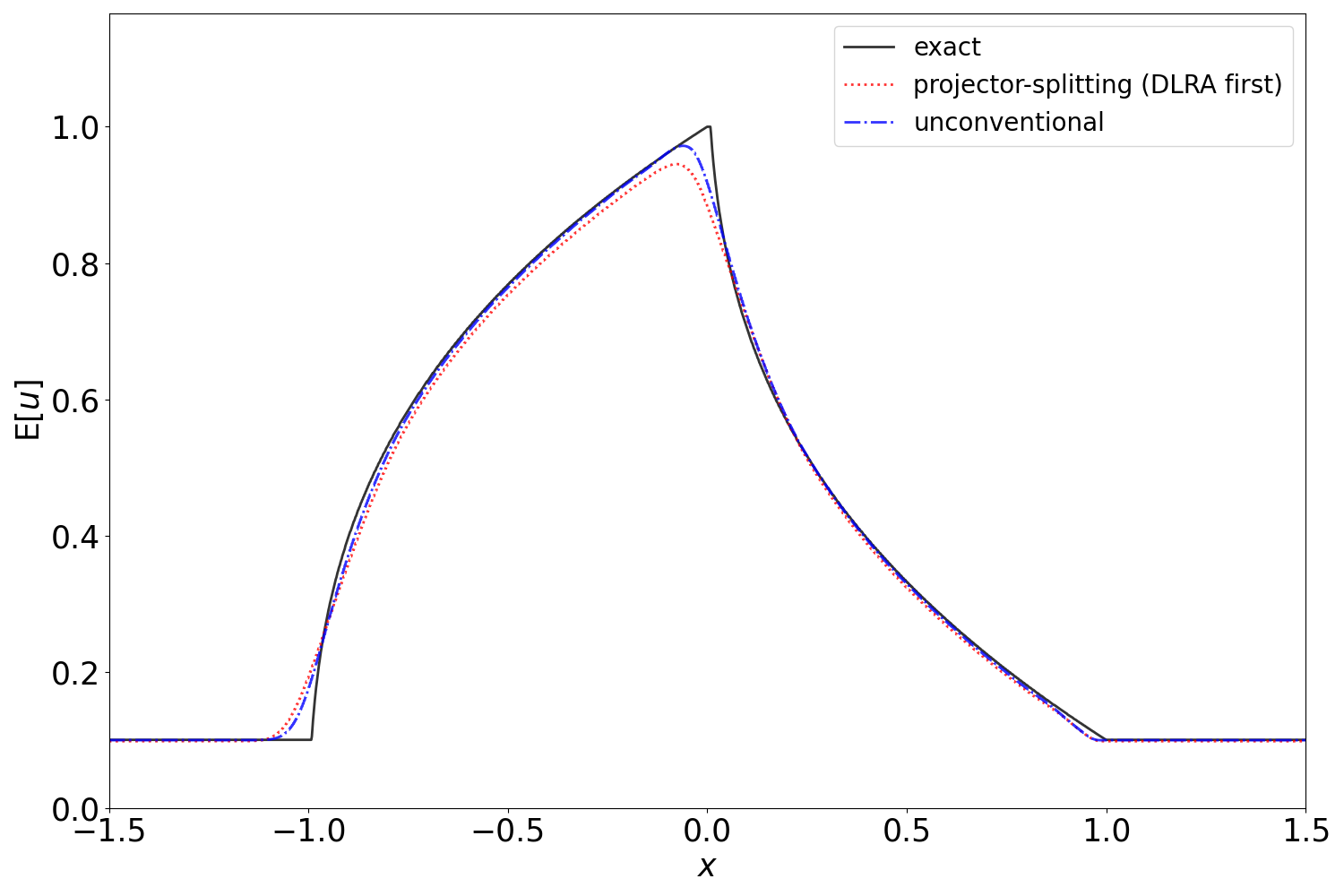}
		\caption{$r=10$}
		\label{fig:Figure4b}
	\end{subfigure}
	\caption{Expected value computed with the unconventional integrator, the projector-splitting integrator and its stabilized discretization for ranks $r=5$ and $r=10$. The term \textit{stable} means that the evolution equations of the integrator are discretized with a stable scheme. The projector-splitting integrator for the fully discretized problem yields infinite values for rank $r=10$ and is therefore not shown.}
	\label{fig:Figure4}
\end{figure}
\begin{figure}[h!]
\centering
	\begin{subfigure}{0.49\linewidth}
		\centering
		\includegraphics[scale=0.21]{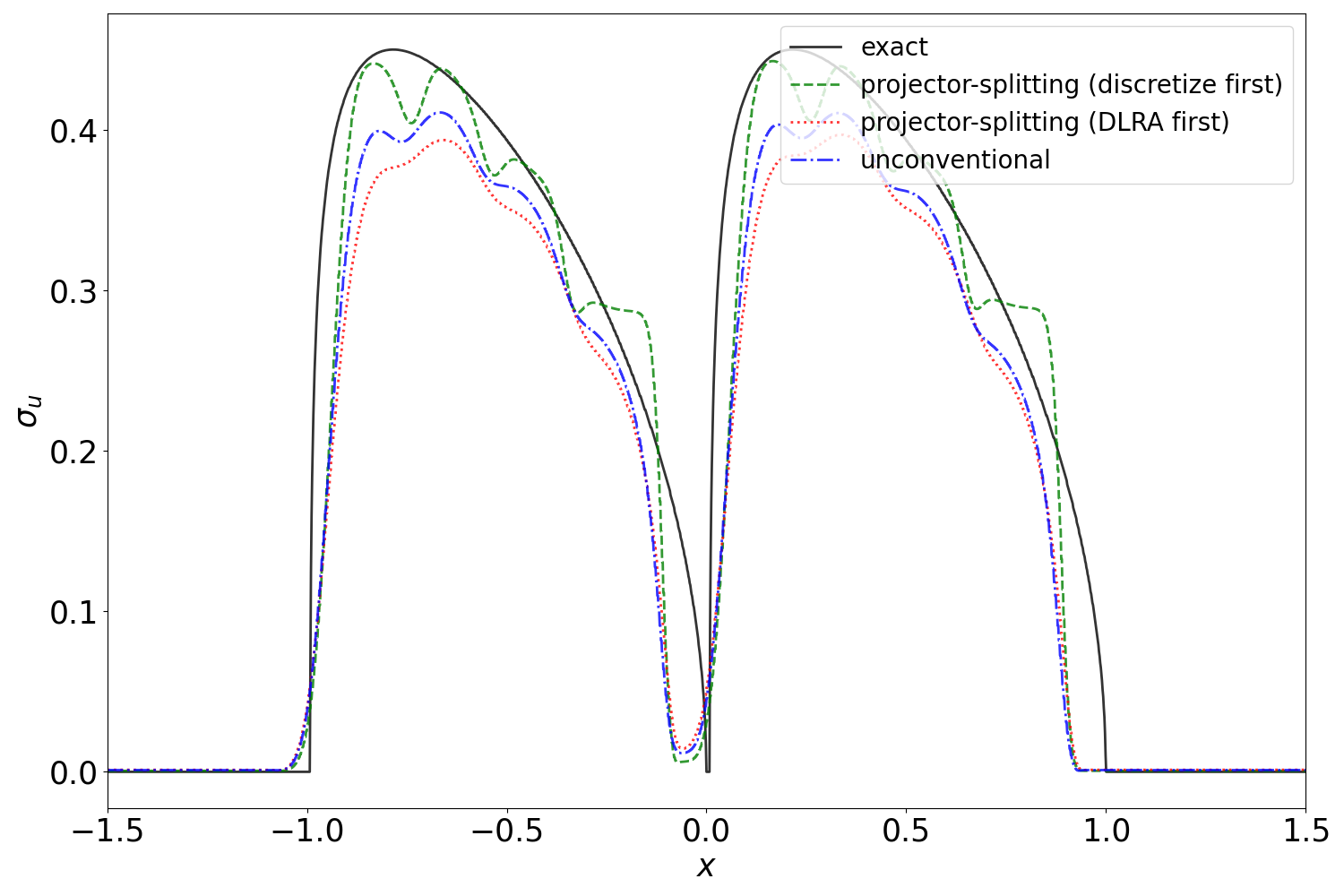}
		\caption{$r=5$}
		\label{fig:Figure5a}
	\end{subfigure}
	\begin{subfigure}{0.49\linewidth}
		\centering
		\includegraphics[scale=0.21]{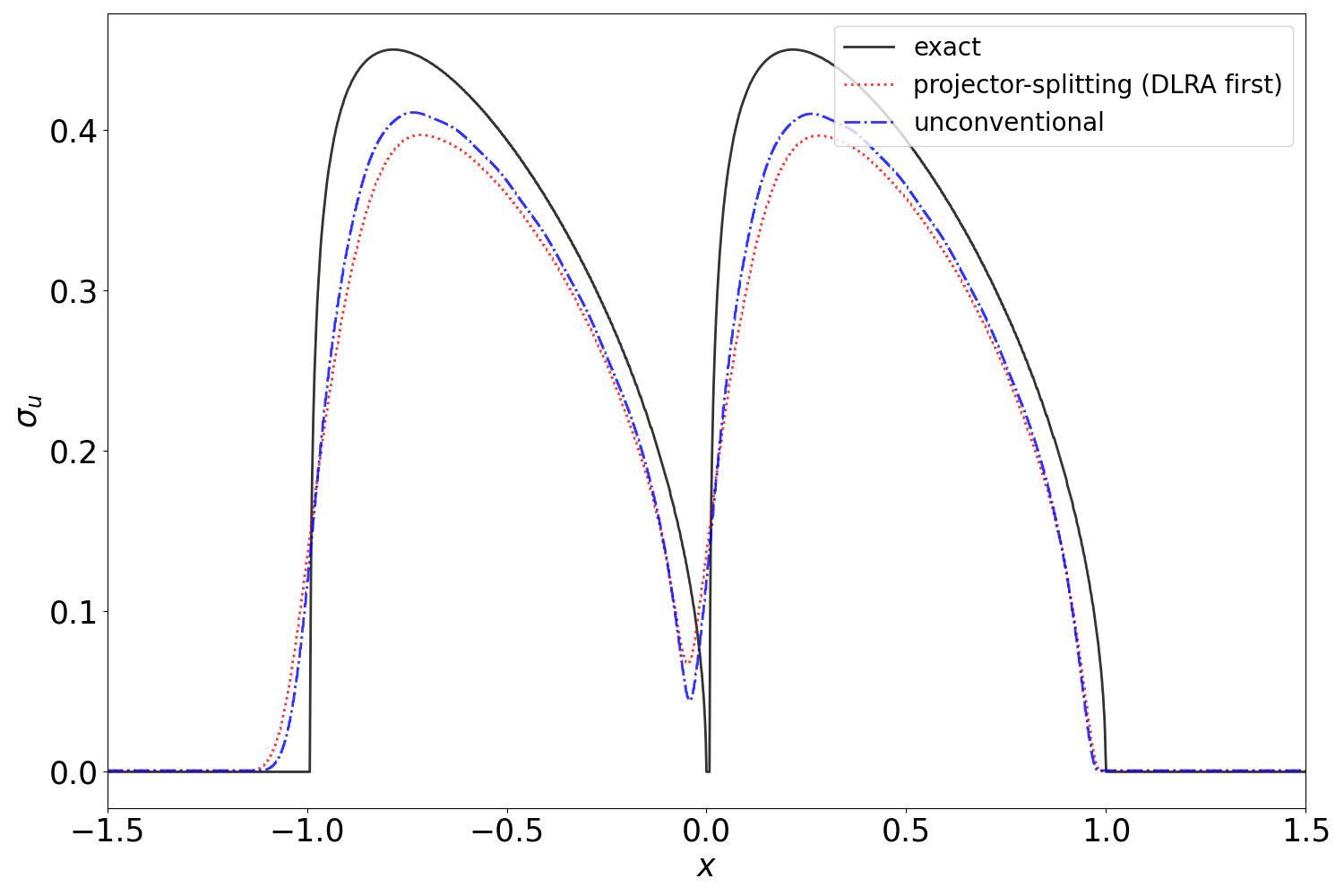}
		\caption{$r=10$}
		\label{fig:Figure5b}
	\end{subfigure}
	\caption{Standard deviation computed with the unconventional integrator, the projector-splitting integrator and its stabilized discretization for ranks $r=5$ and $r=10$. The projector-splitting integrator for the fully discretized problem yields infinite values for rank $r=10$ and is therefore not shown.}
	\label{fig:Figure5}
\end{figure}

Again, the $L^2$-norms of solutions computed with different integrators and discretizations are investigated in Figure~\ref{fig:Figure6}. It is observed that the projector-splitting integrator at rank five leads to a dissipation of the Frobenius norm, whereas the norm is amplified and leads to infinite values at rank $10$. The unconventional integrator and the stable discretization of the projector-splitting integrator both dissipate the norm. As expected, a weaker dissipation is observed for the unconventional integrator.
\begin{figure}[h!]
\centering
	\begin{subfigure}{0.49\linewidth}
		\centering
		\includegraphics[scale=0.3]{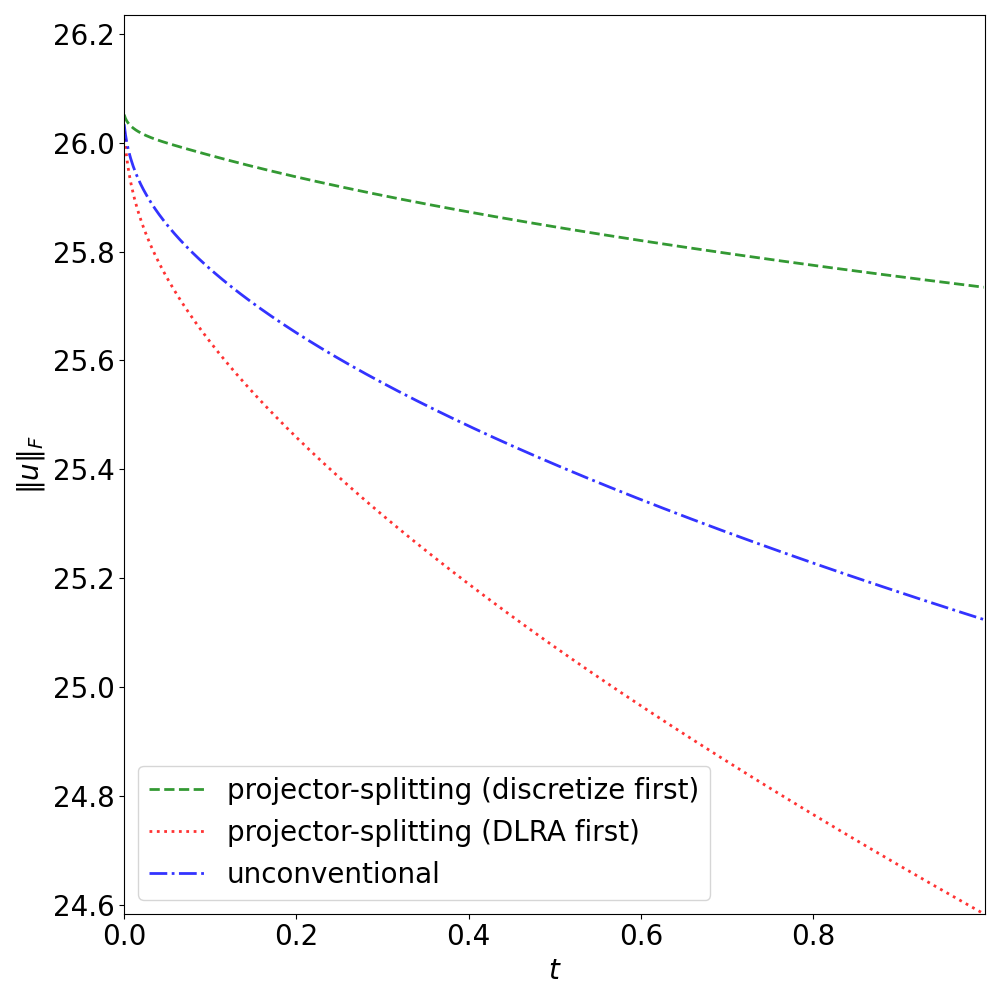}
		\caption{$r=5$}
		\label{fig:Figure6a}
	\end{subfigure}
	\begin{subfigure}{0.49\linewidth}
		\centering
		\includegraphics[scale=0.3]{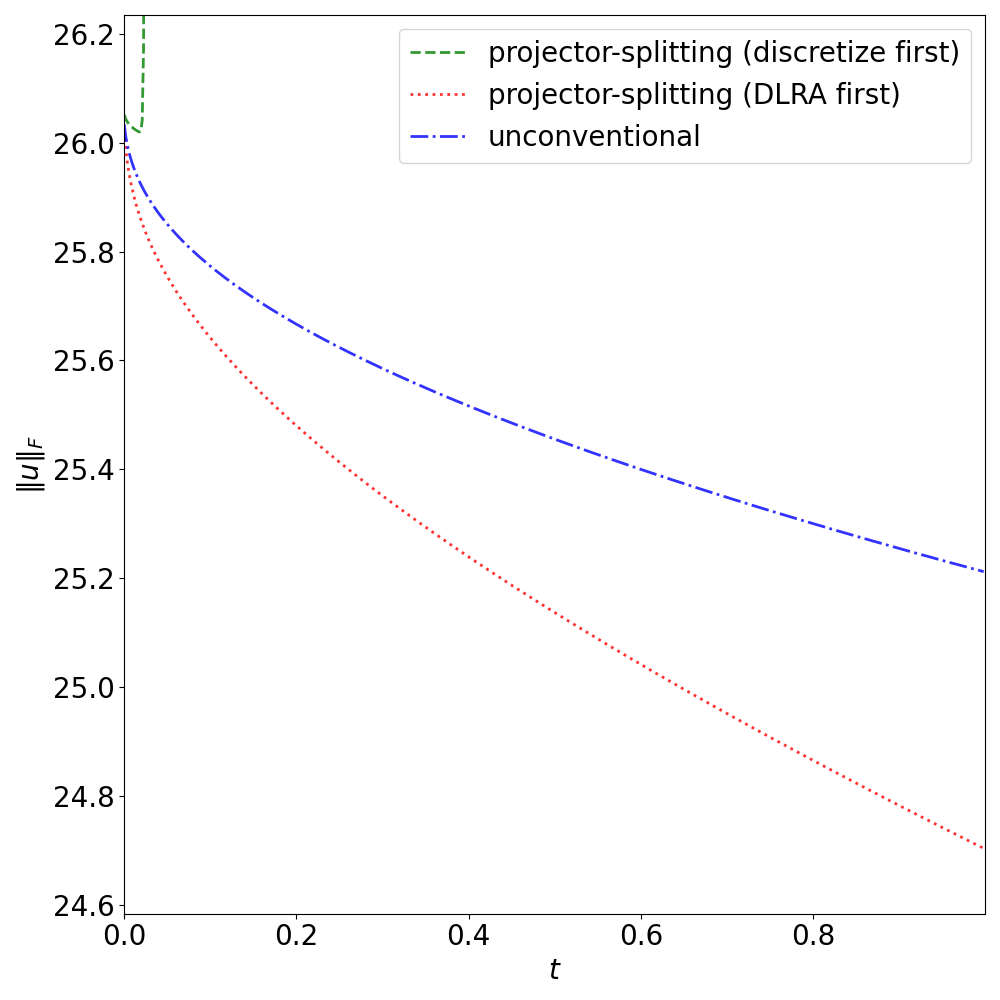}
		\caption{$r=10$}
		\label{fig:Figure6b}
	\end{subfigure}
	\caption{Time evolution of the Frobenius norm for ranks $r=5$ and $r=10$ during the computation of the uncertain advection problem.}
	\label{fig:Figure6}
\end{figure}
\begin{figure}[h!]
\centering
	\begin{subfigure}{0.49\linewidth}
		\centering
		\includegraphics[scale=0.3]{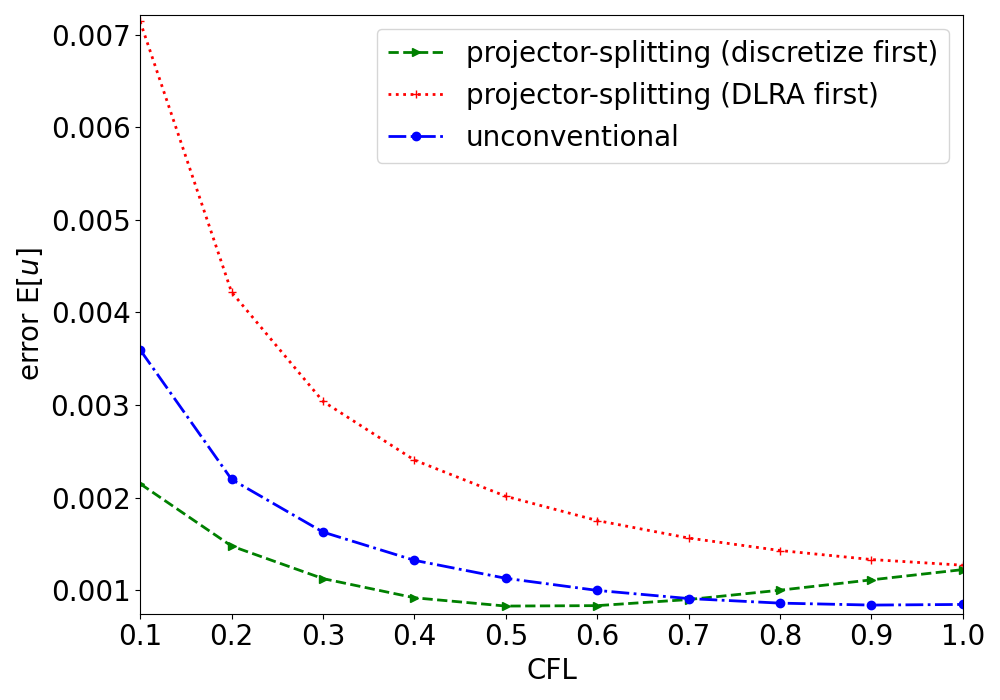}
		\caption{$r=5$, error expectation}
		\label{fig:Figure7a}
	\end{subfigure}
	\begin{subfigure}{0.49\linewidth}
		\centering
		\includegraphics[scale=0.3]{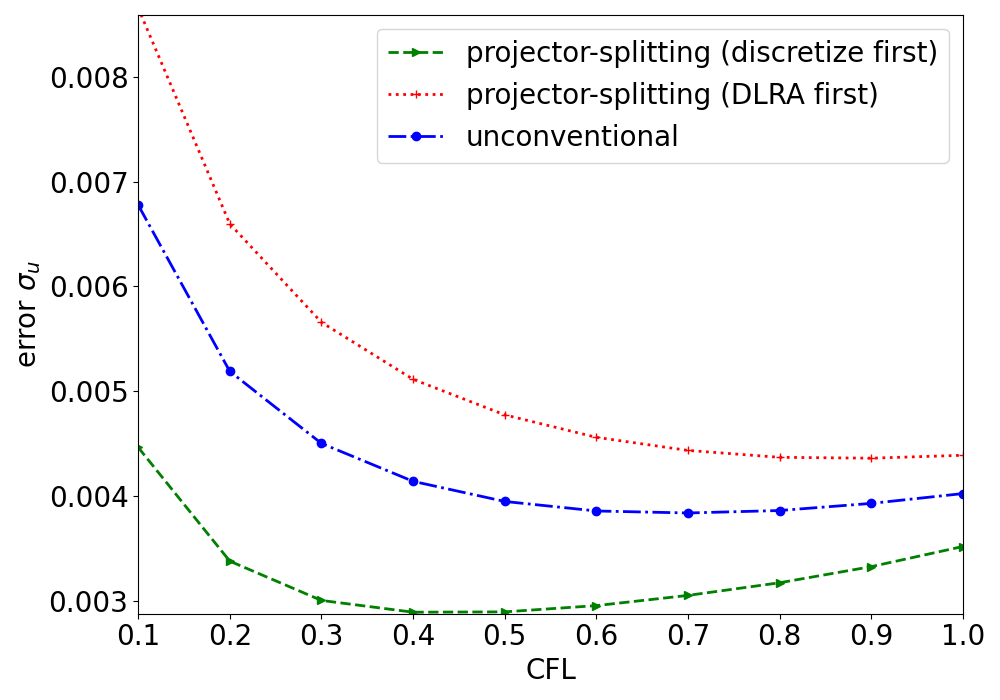}
		\caption{$r=5$, error standard deviation}
		\label{fig:Figure7b}
	\end{subfigure}
		\begin{subfigure}{0.49\linewidth}
		\centering
		\includegraphics[scale=0.3]{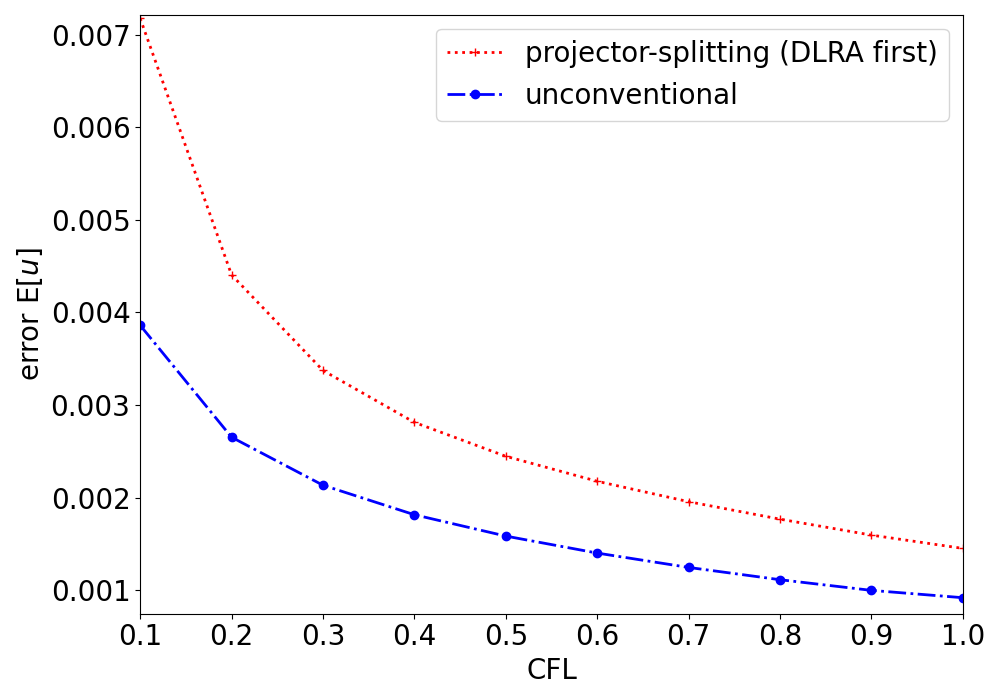}
		\caption{$r=10$, error expectation}
		\label{fig:Figure7a}
	\end{subfigure}
	\begin{subfigure}{0.49\linewidth}
		\centering
		\includegraphics[scale=0.3]{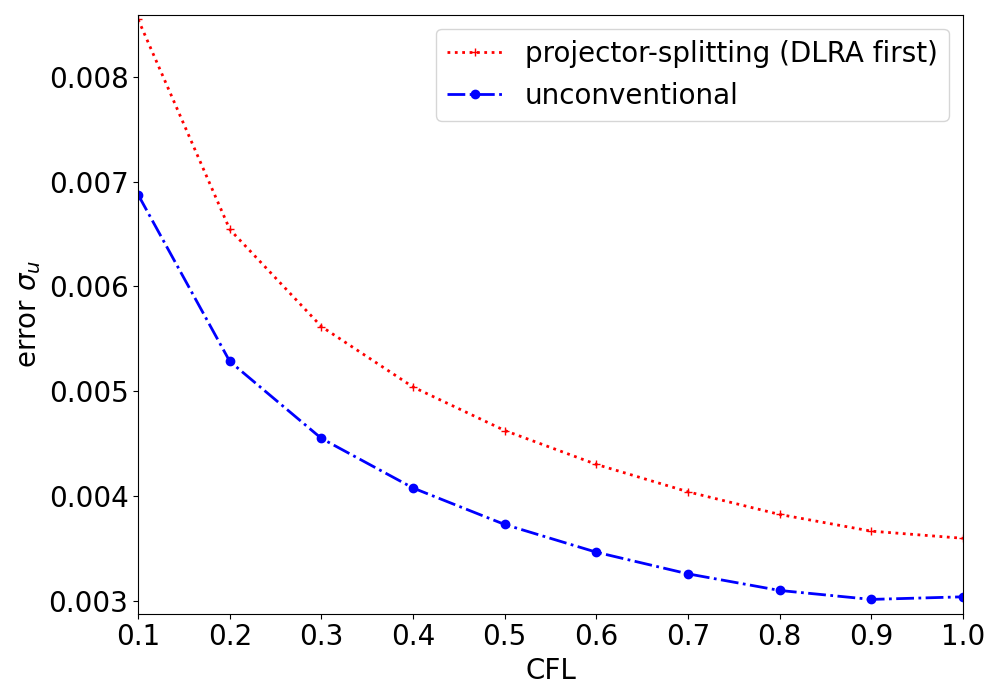}
		\caption{$r=10$, error standard deviation}
		\label{fig:Figure7b}
	\end{subfigure}
	\caption{CFL study for the uncertain advection problem.}
	\label{fig:Figure7}
\end{figure}
Lastly, we perform a CFL study, which we depict in Figure~\ref{fig:Figure7}. Here, we observe that the matrix projector-splitting integrator applied to the discrete system only remains stable for rank $r=5$. For a CFL number of one, the rank five approximation of the expected value shows an increased error compared to the unconventional integrator as well as the stabilized matrix projector-splitting integrator. However, the error of the standard deviation is improved for the projector-splitting integrator, when performing the discretization first. Our analysis provides an idea why this behaviour can be observed. Commonly, higher order moments are strongly affected by artificial diffusion, see e.g. \cite{kusch2020filtered}. Since the $S$-step of the projector-splitting integrator reverts the diffusion which results from the numerical viscosity of the chosen finite volume method, higher order moments are not dampened too heavily. However, the reduced diffusion yields oscillatory approximations of zero order moments, which are commonly improved by artificial diffusion \cite{kusch2020filtered}. Furthermore, as shown in Theorem~\ref{th:L2instability}, reverting diffusion will not guarantee stability, which can be seen for the rank ten results. Here, the projector-splitting integrator applied to the discrete problem becomes unstable. As shown in Theorems~\ref{th:firstDRL} and \ref{th:unconventional}, the unconventional and stabilized projector-splitting integrator guarantee stability. 

\section*{Acknowledgments}
	The authors would like to thank Christian Lubich for his helpful comments and suggestions, which have been important for the presentation as well as deeper understanding of our stability analysis. This work was funded by the Deutsche Forschungsgemeinschaft (DFG, German Research Foundation) --- Project-ID 258734477 --- SFB 1173.
	
	\bibliographystyle{abbrv}
	\bibliography{stability} 
	
\end{document}